   \providecommand{\fg}{\ifdim\lastskip>\z@\unskip\fi~\frqq}%
\DeclareRobustCommand{\cyrtext}{%
  \fontencoding{T2A}\selectfont\def\encodingdefault{T2A}}
\DeclareRobustCommand{\textcyr}[1]{\leavevmode{\cyrtext #1}}
\theoremstyle{plain}
\newtheorem{thm}{\protect\theoremname}
\theoremstyle{plain}
\newtheorem{prop}[thm]{\protect\propositionname}
\theoremstyle{plain}
\newtheorem{lem}[thm]{\protect\lemmaname}
\theoremstyle{remark}
\newtheorem*{rem*}{\protect\remarkname}
\theoremstyle{remark}
\newtheorem*{claim*}{\protect\claimname}
\gdef\SetFigFontNFSS#1#2#3#4#5{} 
\gdef\SetFigFont#1#2#3#4#5{} 
\def\clap#1{\hbox to 0pt{\hss#1\hss}}
\def\colonqq{\mathrel{\mathop:}=}
\DeclareMathOperator{\supp}{supp}
\DeclareMathOperator{\Mink}{Mink}
\DeclareMathOperator{\capa}{cap}
\definecolor{myblue}{rgb}{0.09,0.32,0.44} 
\theoremstyle{remark}
\newtheorem*{qst*}{Question}
\newtheorem*{rmrks*}{Remarks}
\theoremstyle{plain}
\newtheorem*{assumption}{Assumption}
\newlength{\tempindent} 
\newcommand{\lazyenum}{
\setlength{\tempindent}{\parindent} 
\begin{enumerate}[leftmargin=0cm,itemindent=0.7cm,labelwidth=\itemindent,labelsep=0cm,align=left,label=\arabic*)]
\setlength{\parskip}{\smallskipamount}
\setlength{\parindent}{\tempindent}
}
\renewcommand{\andify}{%
  \nxandlist{\unskip, }{\unskip{} \@@and~}{\unskip{} \@@and~}}
\def\author@andify{%
  \nxandlist {\unskip ,\penalty-1 \space\ignorespaces}%
    {\unskip {} \@@and~}%
    {\unskip \penalty-2 \space \@@and~}%
}
\let\@wraptoccontribs\wraptoccontribs
\def\afs#1#2{\href{#1}{\nolinkurl{#2}}}
\def\afs#1#2{\burlalt{#1}{#2}}
\providecommand{\claimname}{Claim}
\providecommand{\lemmaname}{Lemma}
\providecommand{\propositionname}{Proposition}
\providecommand{\remarkname}{Remark}
\providecommand{\theoremname}{Theorem}
\begin{document}
\title[uniqueness along subsequences]{Cantor uniqueness and multiplicity along subsequences}
\author{Gady Kozma and Alexander Olevski\u\i}
\address{GK: Weizmann institute of Science, Rehovot, Israel.}
\email{gady.kozma@weizmann.ac.il}
\address{AO: Tel Aviv University, Tel Aviv, Israel}
\email{olevskii@post.tau.ac.il}
\begin{abstract}
We construct a sequence $c_{l}\to0$ such that the trigonometric series
$\sum c_{l}e^{ilx}$ converges to zero everywhere on a subsequence
$n_{k}$. We show that any such series must satisfy that the $n_{k}$
are very sparse, and that the support of the related distribution
is quite large.
\end{abstract}

\maketitle

\section{Introduction}

In 1870, Georg Cantor proved his famous uniqueness theorem for trigonometric
series: if a series $\sum c_{l}e^{ilx}$ converges to zero for every
$x\in[0,2\pi]$, then the $c_{l}$ are all zero \cite{C1870}. The
proof used important ideas from Riemann's \emph{Habilitationsschrift},
namely, that of taking the formal double integral $F(x)=\sum\frac{1}{l^{2}}c_{l}e^{ilx}$
and examining the second Schwarz derivative of $F$. Cantor's proof
is now classic and may be found in many books, e.g.\ \cite[\S IX]{Z}
or \cite[\S XIV]{B64}. A fascinating historical survey of these early
steps in uniqueness theory, including why Riemann defined $F$ in
the first place, may be found in \cite{C93}. (briefly, Riemann was
writing \emph{necessary} conditions for a function to be represented
by a trigonometric series in terms of its double integral).

Cantor's result may be extended in many directions, and probably the
most famous one was the direction taken by Cantor himself, that of
trying to see if the theorem still holds if the series is allowed
not to converge at a certain set, which led Cantor to develop set
theory, and led others to the beautiful theory of sets of uniqueness,
see \cite{KL87}. But in this paper we are interested in a different
kind of extension: does the theorem hold when the series $\sum c_{l}e^{ilx}$
is required to converge only on a subsequence?

This problem was first tackled in 1950, when Kozlov constructed a
nontrivial sequence $c_{l}$ and a second sequence $n_{k}$ such that
\begin{equation}
\lim_{k\to\infty}\sum_{l=-n_{k}}^{n_{k}}c_{l}e^{ilx}=0\qquad\forall x\in[0,2\pi].\label{eq:1}
\end{equation}
See \cite{K50} or \cite[\S XV.6]{B64}. A feature of Kozlov's construction
that was immediately apparent is that the coefficients $c_{l}$ are
(at least for some $l$), very large. Therefore it was natural to
ask if it is possible to have (\ref{eq:1}) together with $c_{l}\to0$.
The problem was first mentioned in the survey of Talalyan \cite{T60}
\textemdash{} this is problem 13 in \S10 (note that there is a mistake
in the English translation), and then repeated in \cite{AT64} where
the authors note, on page 1406, that the problem is ``very hard''.
In the same year, the survey of Ulyanov \cite[page 20 of the English version]{U64}
mentions the problem and conjectures that in fact, no such series
exists. Skvortsov constructed a counterexample for the Walsh system
\cite{S75}, but not for the Fourier system.

\subsection{Results}

In this paper we answer this question in the positive. Here is the
precise statement:
\begin{thm}
\label{thm:example}There exist coefficients $c_{l}\to0$, not all
zero, and $n_{k}\to\infty$ such that (\ref{eq:1}) holds.
\end{thm}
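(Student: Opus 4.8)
The plan is to build the example by an explicit, infinitely iterated construction, best organised through Riemann's second-integral function. First, decompose the series to be constructed as $\sum_{k\ge1}P_k$, where $P_k$ is a trigonometric polynomial whose spectrum lies in the annulus $n_{k-1}<|l|\le n_k$ (with $n_0=0$ and $c_0=0$); then $S_{n_k}=P_1+\cdots+P_k=:Q_k$, and the theorem reduces to choosing the blocks $P_k$ so that (a) $\max_l|\widehat{P_k}(l)|\to0$, which is exactly $c_l\to0$; (b) $P_1\not\equiv0$, which gives non-triviality, since every later block is high-frequency and never disturbs $\widehat{P_1}$; and (c) $Q_k(x)\to0$ for \emph{every} $x$. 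It is convenient to pass to the formal double primitive $F(x)=\sum_{l\ne0}\frac{c_l}{-l^2}e^{ilx}$: this converges uniformly to a continuous function, a Riemann-type computation recasts (c) as the statement that the second symmetric differences of $F$ at scales $h_k\asymp 1/n_k$ tend to $0$ at every point, $\widehat F(l)=o(1/l^2)$ makes $F$ fairly smooth, and (b) makes $F$ non-affine. Two soft remarks fix the nature of the difficulty. First, $\|Q_k\|_\infty$ cannot remain bounded: otherwise dominated convergence would force $\int|Q_k|^2\to0$, hence $\sum_{|l|\le n_k}|c_l|^2\to0$, hence $c_l\equiv0$. Second, term-by-term integration of $\sum_k P_k$ against $e^{il_0x}$ must fail for the $l_0$ with $c_{l_0}\ne0$. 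So the construction is forced to produce polynomials $Q_k$ that are \emph{unbounded}, tending to $0$ at every point only because the sets where they are large have small measure and slide off every fixed $x$.

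The engine is a one-step lemma: given $Q_{k-1}$ (a polynomial of known degree), a coefficient budget $\varepsilon_k$, and accuracy parameters $\delta_k,\eta_k$, choose $n_k$ very large and a block $P_k$ with spectrum in $(n_{k-1},n_k]$ and $\max_l|\widehat{P_k}(l)|<\varepsilon_k$ such that $\{\,|Q_k|>\delta_k\,\}$ has measure $\le\eta_k$ and, more importantly, the pair $(Q_k,n_k)$ is ``better organised'' than $(Q_{k-1},n_{k-1})$ in whatever inductive invariant drives the scheme (a weighted distribution-function bound, say). Because a high-frequency polynomial cannot cancel a low-frequency one pointwise, $P_k$ is not a naive anti-spike; rather it is assembled from high-frequency ingredients — rescaled Fej\'er or de~la~Vall\'ee--Poussin kernels, or lacunary/pseudorandom sums, modulated into the prescribed band and with amplitudes pushed below $\varepsilon_k$ by making the band long enough — arranged over a fine net of the region where $Q_{k-1}$ is large so that, after adding $P_k$, the large region shrinks. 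The freedom to take $n_k$ as large as we please (this is where the assertion that the $n_k$ must be very sparse originates) is precisely what lets very many tiny coefficients still reshape $Q_k$.

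Iterating with $\varepsilon_k,\delta_k,\eta_k\downarrow0$ fast produces all the blocks; freezing $P_1\not\equiv0$ at the outset and keeping everything afterwards high-frequency secures non-triviality, and the budget secures $c_l\to0$. The hard part — and, I expect, the reason the problem resisted for so long — is the passage from ``$Q_k\to0$ off a small set'' to ``$Q_k\to0$ at \emph{every} point'' under the standing constraint $c_l\to0$ (which rules out the large coefficients that make Kozlov's construction comparatively easy): the large sets of successive stages must be nested, or at least must have empty upper limit, which forces the unavoidable spillage created at stage $k$ — side lobes of the kernels, imperfect fit on the nets, the fast oscillation that prevents exact cancellation — to be quantitatively absorbed at stage $k+1$, and the residual values at the finitely many frozen points to decay geometrically. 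Making all of these balances close simultaneously — how fast $n_k$ must grow against $\|Q_{k-1}\|_\infty$ and $1/\varepsilon_k$, how fine the nets must be, how the large sets are forced to shrink and re-cover the circle — is the technical core; the remaining ingredients (Bernstein estimates to pass from nets to intervals, summation of geometric residuals, verification of the spectral supports) are routine bookkeeping.
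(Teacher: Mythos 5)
There is a genuine gap, and you have in fact put your finger on it yourself: you describe the passage from ``$Q_k\to0$ off a small set'' to ``$Q_k\to0$ at every point'' as ``the technical core'' and ``the reason the problem resisted for so long,'' and then leave it unresolved. Everything else in your sketch --- the one-step lemma, the additive high-frequency correction $P_k$, the nets, the shrinking exceptional sets --- is a plan for producing $Q_k$ small off a small set; nothing explains how to guarantee that every fixed $x$ eventually escapes the bad set, and it is not clear that your scheme can arrange this. Without that the argument does not close. The paper's resolution is structurally different and appears nowhere in your sketch: the classical \emph{localisation principle} in Rajchman's form (\ref{eq:KS}). Because $c_l\to0$, off the support $K$ of the distribution $\sum c_l e(lx)$ the \emph{full} series $S_n(x)$ already converges to zero, uniformly on compact sets disjoint from $K$, with no control on the construction needed there at all. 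The only place the iteration must keep $S_{n_k}$ small is \emph{on} $K$. This recasts the problem completely and is what makes it tractable.

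Your decomposition also points you away from the structure that makes localisation usable. You work with the partial sums $Q_k=P_1+\dots+P_k$, trigonometric polynomials that carry no support information. The paper instead builds a sequence of smooth functions $f_k$ (not polynomials; $\widehat{f_k}$ is never finitely supported) with nested supports $\supp f_{k+1}\subseteq\supp f_k$; the limit distribution lives in $\bigcap\supp f_k$, $S_{n_k}(f_k)$ is made small on $\supp f_k$ and hence on the intersection, and Rajchman handles the complement. The nesting is secured because the corrections are \emph{multiplicative}, not additive: in lemma \ref{lem:yes g} one sets $g=f\cdot h(r\,\cdot)$, giving $\supp g\subseteq\supp f$ for free, whereas adding a spectral block to $Q_{k-1}$ can never enforce nesting. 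Consequently the low-frequency coefficients are allowed to drift a little at every step (by at most $\epsilon_k$), with non-triviality obtained by summing a geometric series; freezing $\widehat{P_1}$ as you propose is neither necessary nor compatible with multiplicative corrections. Finally, the crucial building block is not a Fej\'er or de~la~Vall\'ee Poussin kernel: lemma \ref{lem:vandermonde} prescribes, via a Vandermonde matrix, the $n$th \emph{partial sum} $S_n(h)$ to be a small polynomial on $\supp h$ without making $h$ itself small --- a quite different device --- and the paper deliberately avoids the Riemann function $F$ throughout, whereas your sketch routes through it.
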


The existence of such an example raises many new questions about the
nature of the $c_{l}$, of the distribution $\sum c_{l}e^{ilx}$,
and of the numbers $n_{k}$. We have two results which show some restrictions
on these objects. The first states, roughly, that the $n_{k}$ must
increase at least doubly exponentially:
\begin{thm}
\label{thm:doubly exponentially}Let $c_{l}\to0$ and let $n_{k}$
be such that (\ref{eq:1}) holds. Assume further that $n_{k+1}=n_{k}^{1+o(1)}$.
Then $c_{l}\equiv0$.
\end{thm}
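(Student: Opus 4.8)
The plan is to run the classical Riemann--Cantor scheme, with the hypothesis ``the series converges everywhere'' replaced by the weaker data ``(\ref{eq:1}) holds and $n_{k+1}=n_k^{1+o(1)}$''. Put $\varphi(t)=\bigl(\tfrac{\sin(t/2)}{t/2}\bigr)^{2}$, $\varphi(0)=1$, and introduce Riemann's function
\[
F(x)=\frac{c_{0}x^{2}}{2}-\sum_{l\neq0}\frac{c_{l}}{l^{2}}e^{ilx},
\]
which is continuous because $c_{l}\to0$ and whose non-quadratic part is $2\pi$-periodic. A short computation gives, for all $x$ and $h>0$,
\[
\Phi_{h}(x):=\frac{F(x+h)+F(x-h)-2F(x)}{h^{2}}=\sum_{l}c_{l}\varphi(lh)e^{ilx},
\]
the series converging absolutely since $\varphi(lh)=O(l^{-2})$. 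Writing $T_{n}(x)=\sum_{|l|\le n}c_{l}e^{ilx}$ (and recalling that $n_{k}\to\infty$), I would reduce the theorem to the single assertion that $\Phi_{h}(x)\to0$ as $h\to0$ for \emph{every} $x$: granting it, Schwarz's lemma --- together with Riemann's smoothness relation $h^{-1}(F(x+h)+F(x-h)-2F(x))\to0$ uniformly, which again needs only $c_{l}\to0$ --- forces $F$ to be affine on $[0,2\pi]$, whence comparison with the explicit formula for $F$ and periodicity of its non-quadratic part yield $c_{0}=0$ and then $c_{l}=0$ for all $l$.

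The subtlety is that $\Phi_{h}(x)$ is a smoothing of the series at scale $1/h$, and the trivial estimate is only $|\Phi_{h}(x)|\le\sum_{|l|\lesssim1/h}|c_{l}|+O\!\bigl(h^{-1}\sup_{|l|>1/h}|c_{l}|\bigr)=o(h^{-1})$, short of what we need by a factor $1/h$. To recover it one must use (\ref{eq:1}). I would group $\sum_{l}=\sum_{k\ge1}\sum_{n_{k-1}<|l|\le n_{k}}$ and, on the $k$-th block, replace $\varphi(|l|h)$ by its value $\varphi(n_{k}h)$ at the right endpoint. Modulo a telescoping remainder which tends to $0$ by (\ref{eq:1}) and the Silverman--Toeplitz regularity theorem --- here one uses that $\varphi$ has bounded variation and that $T_{n_{k}}(x)\to0$ --- this reduces matters to showing that
\[
E_{h}(x)=\sum_{k\ge1}\ \sum_{n_{k-1}<|l|\le n_{k}}c_{l}\bigl(\varphi(|l|h)-\varphi(n_{k}h)\bigr)e^{ilx}\longrightarrow0\qquad(h\to0).
\]

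Bounding $E_{h}(x)$ is the heart of the matter. The naive estimate $|\varphi(|l|h)-\varphi(n_{k}h)|\le\operatorname{Var}\bigl(\varphi\text{ on }[n_{k-1}h,n_{k}h]\bigr)$, summed block by block, again loses a factor $1/h$: the block straddling the scale $1/h$, across which $\varphi(\cdot\,h)$ drops from near $1$ to near $0$, can contribute as much as $\sum_{|l|\asymp1/h}|c_{l}|=o(1/h)$; note that no rate on $c_{l}$ is available, so absolute-value bounds cannot by themselves win. The gain must come from the growth condition: since $n_{k+1}=n_{k}^{1+o(1)}$ means $\log n_{k+1}-\log n_{k}=o(\log n_{k})$, for large $N$ the range over which $\varphi(\cdot\,h)$ effects its transition (essentially $\{|l|:N^{\varepsilon}\le|l|\le N\}$ with $N=1/h$) is cut by the points $n_{k}$ into a number of pieces that grows as $\varepsilon\to0$; redoing the ``replace $\varphi$ by a constant'' step with these finer break-points makes the per-piece variation of $\varphi$ arbitrarily small, while the leftover is once more a bounded-variation-weighted sum of differences $T_{n_{k}}(x)-T_{n_{k-1}}(x)\to0$. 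Carrying this out --- in particular handling the genuinely delicate case in which a single long block $(n_{k-1},n_{k}]$ contains the whole transition region, and verifying that the bookkeeping closes precisely when the growth is \emph{slower} than doubly exponential --- is where I expect essentially all the work to lie, and it is the step I expect to be the main obstacle. Once $\Phi_{h}\to0$ pointwise is established, the conclusion follows from Schwarz's lemma and the periodicity of $F-c_{0}x^{2}/2$ as above.
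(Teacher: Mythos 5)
Your proposal follows the classical Riemann route: pass to the Riemann function $F$, show that the second Schwarz derivative $\Phi_h(x)=\sum_l c_l\varphi(lh)e^{ilx}\to0$ pointwise, and then invoke Schwarz's lemma. You correctly identify that the entire difficulty is concentrated in the block $(n_{k-1},n_k]$ that straddles the transition scale $1/h$, but the sketch you give for handling it does not close, and in fact I do not see how the growth hypothesis $n_{k+1}=n_k^{1+o(1)}$ can help there. Here is the concrete obstruction. Suppose $n_{k-1}<1/h<n_k$. After Abel summation inside this one block, the error is controlled by
\[
\Big|\sum_{l=n_{k-1}+1}^{n_k-1}\big(w_l-w_{l+1}\big)\,T_l(x)\Big|\;\le\;\operatorname{Var}\!\big(\varphi\text{ on }[n_{k-1}h,n_kh]\big)\cdot\max_{n_{k-1}<l<n_k}|T_l(x)|,
\]
where $w_l=\varphi(lh)-\varphi(n_kh)$. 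The variation factor is $\Theta(1)$ because the block really does span the transition. But the quantities $T_l(x)$ for $l$ strictly between consecutive subsequence points are \emph{not} controlled by the hypothesis (\ref{eq:1}) at all; the only a priori bound is the trivial $|T_l(x)|=o(l)$ coming from $c_l\to0$, which gives $o(n_k)=o((1/h)^{1+o(1)})$ and is useless. Your proposed fix --- subdividing the range $[N^\varepsilon,N]$ at the points $n_k$ and making the per-piece variation small --- introduces more pieces only \emph{away} from $l\sim N$, where $\varphi(\cdot h)$ is already nearly constant; it does not subdivide the single critical block around $l\sim N$, which is exactly the one carrying the $\Theta(1)$ variation. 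So the ``bookkeeping'' does not close, and you acknowledge as much yourself (``it is the step I expect to be the main obstacle'').

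For what it is worth, the paper's proof deliberately avoids the Riemann function altogether (this is emphasized in \S1.3). It proceeds via the localisation principle in Rajchman's form together with an $L^2$ iteration: after localising to make $S_{n_k}$ bounded on the support $K$, Lemma~\ref{lem:new riemann} establishes that $\|S_s\|\ge c\|S_r\|^2$ whenever $r$ is in the subsequence and $s>r^{3/2}\log^4 r$; iterating this along $r_{i+1}\approx r_i^{7/4}$ (available because $n_{k+1}=n_k^{1+o(1)}$) produces $\|S_{r_i}\|\ge\exp(2^{i-i_0})$, which eventually violates the trivial bound $\|S_r\|=O(\sqrt r)$. That argument never needs to control $T_m$ for $m$ off the subsequence, which is precisely the quantity your scheme cannot reach; this is the structural reason the paper takes a different path. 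To make progress along your lines you would need an additional mechanism to bound $T_m$ between subsequence points, and nothing in the hypotheses supplies one.
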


Our second result is a lower bound on the dimension of the support
of the distribution $\sum c_{l}e^{ilx}$. It is stated in terms of
the upper Minkowski dimension (see, e.g., \cite{F14} where it is
called the box counting dimension) which we denote by $\dim_{\textrm{Mink}}$.
\begin{thm}
\label{thm:dim}Let $c_{l}\to0$ and let $n_{k}$ be such that (\ref{eq:1})
holds. Let $K$ be the support of the distribution $\sum c_{l}e^{ilx}$,
and assume that 
\[
\dim_{\Mink}(K)<\frac{1}{2}(\sqrt{17}-3)\approx0.561.
\]
Then $c_{l}\equiv0$.
\end{thm}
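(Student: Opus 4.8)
The plan is to pass from the series to its Riemann primitive and reduce the statement to a rigidity property of that primitive. Let $F$ be the continuous periodic function with $\widehat{F}(l)=-c_l/l^2$ for $l\ne0$ (the $c_0$-term contributes only a quadratic and is harmless). From $c_l\to0$, hence $\sup_l|c_l|<\infty$, a routine estimate with the Fej\'er-type multiplier $\psi(t)=(\sin(t/2)/(t/2))^2$ shows that $F$ lies in the little Zygmund class: $\sup_x|\Delta_h^2F(x)|=\omega(h)h$, where $\Delta_h^2F(x):=F(x+h)+F(x-h)-2F(x)$ and $\omega(h)\to0$. Since $f:=\sum c_le^{ilx}=F''$ is supported on $K$ and $\dim_{\Mink}(K)<1$ forces $|K|=0$, the function $F$ is affine on each component of $[0,2\pi]\setminus K$; denote by $s(I)$ the slope on a component $I$. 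The theorem becomes: \emph{such an $F$ is globally affine.}

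\textbf{Step 1: extracting quantitative smoothness from the subsequence.} Off $K$ the hypothesis (\ref{eq:1}) is automatic by Riemann's localization principle (if $F$ is affine near $x_0$ and $c_l\to0$ then $S_n(x_0)\to0$), so its real content is that $S_{n_k}(x)\to0$ at every $x\in K$. I would convert this, using $c_l\to0$, the localized Fej\'er kernel, and the band structure of the blocks $\sum_{n_k<|l|\le n_{k+1}}c_le^{ilx}$, into a quantitative smoothness of $F$ at the scales $h_k:=1/n_k$: across any arc of $K$ of length $\lesssim h_k$ the slope of $F$ changes by at most $o(h_k)$ (equivalently, $\Delta_{h_k}^2F$ is $o(h_k^2)$ near $K$, a quadratic gain over the merely linear Zygmund bound). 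This comparison of $S_{n_k}$ with the second difference of $F$, uniform over $K$ and with no decay rate on $c_l$ available, is the analytic heart of the proof.

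\textbf{Step 2: multi-scale propagation of the slope.} Suppose $F$ is not affine, so $s(I_0)\ne s(I_1)$ for two components $I_0,I_1$; we seek a contradiction. Fix $d$ with $\dim_{\Mink}(K)<d<\tfrac12(\sqrt{17}-3)$, so $d^2+3d<2$, and use $N_K(\delta)\le\delta^{-d}$ for small $\delta$. For scales $\delta$ let $V(\delta)$ be the supremum of $|s(I)-s(I')|$ over components $I,I'$ of length $\ge\delta$ separated only by an arc of $K$ of length $\lesssim\delta$ containing no component of length $>\delta$. To bound such a crossing one refines to a scale $\delta'\approx\delta^{1+d}$: there are $\lesssim(\delta/\delta')^{d}$ sub-arcs of $K$ in between, hence a chain of that many intermediate components of length $\gtrsim\delta(\delta'/\delta)^{d}$ linking $I$ to $I'$; each link costs $V(\delta')$, or $o(\delta')$ if $\delta'$ is one of the special scales $h_k$. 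Accounting the three sources of gain and loss — one power of $\delta$ from the quadratic estimate of Step 1, a factor $\delta^{-d}$ from the number of arcs, and a further $\delta^{-d}$ from the scale ratio, fed back quadratically through $V$ — produces a self-improving inequality that iterates to $V(\delta)\to0$ exactly when $d^2+3d-2<0$. Then all slopes coincide in the limit, contradicting $s(I_0)\ne s(I_1)$; hence $F$ is affine and $c_l\equiv0$.

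\textbf{Main obstacle.} Two points are delicate. First, Step 1: because $F$ need not be differentiable anywhere on $K$ and $f$ need not be a measure, the whole propagation must be phrased through second differences, and turning the pointwise convergence $S_{n_k}\to0$ on $K$ into the stated quantitative smoothness requires genuine work with the kernels. Second, the exponent bookkeeping in Step 2: upper Minkowski dimension is not uniform over subsets of $K$, and several auxiliary estimates (arc counts, lengths of linking components, the single-arc crossing) carry $\delta^{o(1)}$ slack, so the sequence of scales — necessarily tied to the doubly-exponential $n_k$ of Theorem~\ref{thm:doubly exponentially} — must be chosen so that this slack does not overwhelm the gain and the recursion closes precisely at $\tfrac12(\sqrt{17}-3)$ rather than at a weaker constant.
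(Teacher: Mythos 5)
Your Step~1 is the gap, and the obstruction is structural rather than a matter of ``genuine work with the kernels.'' Writing
\[
\frac{\Delta_h^2F(x)}{h^2}=\sum_l c_l\,e(lx)\,\psi(lh),\qquad\psi(t)=\Bigl(\frac{\sin(t/2)}{t/2}\Bigr)^2,
\]
one sees that the second-difference quotient is a weighted sum over \emph{all} frequencies: the multiplier is not a sharp cutoff and decays only like $(lh)^{-2}$ beyond $|l|\sim1/h$, so it averages every partial sum and is not a proxy for $S_{\lfloor1/h\rfloor}(x)$. Riemann's first theorem---which is the source of the quadratic gain $\Delta_h^2F(x)=o(h^2)$ that your argument needs at points of $K$---holds only when the \emph{full} sequence $S_n(x)$ converges; its proof is an Abel-summation argument that collapses if only a subsequence $S_{n_k}(x)$ is controlled. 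Here the uncontrolled range is huge: Theorem~\ref{thm:doubly exponentially} forces $\log n_{k+1}/\log n_k\not\to1$, and in the examples $n_k$ grows doubly exponentially, so between $n_k$ and $n_{k+1}$ the intermediate partial sums $S_n(x)$ for $x\in K$ are simply unconstrained by the hypotheses. Neither $c_l\to0$ nor the block structure supplies the missing control; all that $c_l\to0$ yields is the little Zygmund bound $\Delta_h^2F=o(h)$, exactly one power of $h$ short of what Step~2's recursion requires, and that missing power is what Step~1 asserts but does not obtain.

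For context: the paper's proof avoids the Riemann function altogether (the introduction stresses this). After a Baire/localization reduction making $S_{n_k}$ uniformly bounded on $K$, it works in $L^2$ with the partial sums themselves, splitting $\|S_r\|^2=\int S_sS_r\varphi+\int S_sS_r(1-\varphi)$ for a smooth $\varphi$ vanishing on $K$. The first integral is exponentially small because $\sum_l c_l\widehat{S_r\varphi}(l)=0$; the second is handled by Cauchy--Schwarz plus a pointwise estimate on $S_r$ off $K$ obtained by integrating $S_r'$ from the nearest point of $K$. This yields $\|S_r\|^2\le C\|S_s\|\bigl(\sqrt{|E|}+\|S_r\|r\log^4s/s\bigr)$ with $E$ a thin neighbourhood of $K$; combined with a matching lower bound $\|S_s\|\ge s^{(1-d)/2+o(1)}$ and the choice $s\approx(r\|S_r\|)^{2/(d+1)}$, one contradicts boundedness of the $c_l$ precisely when $d^2+3d-2<0$. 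You arrived at the same quadratic, but your route to it depends on a smoothness of $F$ that the subsequence hypothesis does not deliver.
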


\subsection{\label{subsec:Comments}Comments and questions}

An immediate question is the sharpness of the double exponential bound
of theorem \ref{thm:doubly exponentially}. The proof of theorem \ref{thm:example}
which we will present is not quantitative, but it can be quantified
with only a modicum of effort, giving:
\begin{quote}
\emph{There exists $c_{l}\to0$ and $n_{k}=\exp(\exp(O(k))$ such
that (\ref{eq:1}) holds.}
\end{quote}
(this quantitative version, and all other claims in this section,
\S \ref{subsec:Comments}, will not be proved in this paper). Thus
in this setting the main problem remaining is the constant in the
exponent. The reader might find it useful to think about the question
as follows: suppose $n_{k+1}=n_{k}^{\lambda}$. For which value of
$\lambda$ is it possible to construct a counterexample with this
$n_{k}$?

But an even more interesting question is: what happens when the condition
$c_{l}\to0$ is removed from theorem \ref{thm:doubly exponentially}?
The answer is no longer doubly exponential, in fact Nina Bary \cite{B60}
showed that one can take $n_{k}$ growing only slightly faster than
exponentially, and conjectured that this rate of growth is optimal.
Our techniques allow only modest progress towards Bary's conjecture:
we can show that if $n_{k+1}-n_{k}=o(\log k)$ then no such example
may exist.

A variation of the problem where our upper and lower bounds match
more closely is the following: suppose that we require $c_{l}=0$
for all $l<0$ (often this is called an ``analytic'' version of
the problem, because there is a naturally associated analytic function
in the disk, $\sum c_{l}z^{l}$). In this case, the following can
be proved. On the one hand, one can extend Bary's construction and
find an example of a $c_{l}$ and $n_{k}$ both growing slightly faster
than exponential such that 
\[
\lim_{k\to\infty}\sum_{l=0}^{n_{k}}c_{l}e^{ilx}=0\qquad\forall x.
\]
On the other hand, it is not possible to have such an example if either
$n_{k+1}-n_{k}\le n_{k}/\log^{2}n_{k}$ or $|c_{l}|\le\exp(Cl/\log^{2}l)$.
This holds for any inverse of a non-quasianalytic sequence.

In a different direction, the condition $c_{l}\to0$ can be improved:
it is possible to require, in theorem \ref{thm:example}, that the
coefficients $c_{l}$ be inside $\ell^{2+\epsilon}$, for any $\epsilon>0$.
This, too, will not be shown in this paper, but the proof is a simple
variation on the proof of theorem \ref{thm:example} below.

Another interesting question is the sharpness of the dimension bound
in theorem \ref{thm:dim}. In our example the dimension of the support
is $1$ (even for the Hausdorff dimension, which is smaller than the
upper Minkowski dimension). It would be very interesting to construct
an example with dimension strictly smaller than $1$. In the opposite
direction, let us remark that in our example the support of the distribution
$\sum c_{l}e^{ilx}$ has measure zero, but it is not difficult to
modify the example so that the support would have positive measure.
We find this interesting because this distribution is so inherently
singular. The support must always be nowhere dense, see lemma \ref{lem:K}
below.

\subsection{Measures}

It is interesting to note that the proofs of theorem \ref{thm:doubly exponentially}
and \ref{thm:dim} do not use the Riemann function in any way. In
fact, the only element of classic uniqueness theory that appears in
the proof is the localisation principle, in the form of Rajchman (see
\S \ref{subsec:Rajchman}). Thus the proof of theorem \ref{thm:doubly exponentially}
is also a new proof of Cantor's classic result. In the 150 years that
passed since its original publication, the only other attempt we are
aware of is \cite{A89}, which gives a proof of Cantor's theorem using
one formal integration rather than two. To give the reader a taste
of the ideas in the proofs of theorems \ref{thm:doubly exponentially}
and \ref{thm:dim}, let us apply the same basic scheme to prove a
simpler result: that no such construction is possible with $c_{l}$
being the Fourier coefficients of a measure.
\begin{prop}
\label{prop:measure}Let $\mu$ be a measure on $[0,2\pi]$ with $\widehat{\mu}(l)\to0$
and let $n_{k}$ be a series such that
\[
\lim_{k\to\infty}\sum_{l=-n_{k}}^{n_{k}}\widehat{\mu}(l)e^{ilx}=0\qquad\forall x\in[0,2\pi].
\]
Then $\mu=0$.
\end{prop}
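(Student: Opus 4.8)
The plan is to combine a Baire category argument, Parseval's identity, and the localisation principle. Throughout write $S_N(\mu)(x)=\sum_{|l|\le N}\widehat\mu(l)e^{ilx}=(D_N*\mu)(x)$ for the symmetric partial sums, so that by hypothesis $S_{n_k}(\mu)(x)\to0$ for every $x\in\mathbb T=[0,2\pi]$.

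\emph{First I would obtain vanishing on one interval.} For $M>0$ set $E_M=\{x\in\mathbb T:|S_{n_k}(\mu)(x)|\le M\text{ for all }k\}$. Since each $S_{n_k}(\mu)$ is continuous, $E_M$ is closed, and since $S_{n_k}(\mu)(x)\to0$ for every $x$ the sets $E_M$ cover $\mathbb T$; by Baire's theorem some $E_{M_0}$ contains an interval $I_0$, so $\sup_k\|S_{n_k}(\mu)\|_{L^\infty(I_0)}\le M_0$. For $\phi\in C^\infty$ supported in $I_0$, dominated convergence then gives $\int S_{n_k}(\mu)(x)\overline{\phi(x)}\,dx\to0$, while $\int S_{n_k}(\mu)(x)\overline{\phi(x)}\,dx=\sum_{|l|\le n_k}\widehat\mu(l)\overline{\widehat\phi(l)}$ converges (absolutely, as $\widehat\mu$ is bounded and $\widehat\phi$ decays rapidly) to $\sum_l\widehat\mu(l)\overline{\widehat\phi(l)}=\int\overline\phi\,d\mu$. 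Hence $\mu\equiv0$ on $I_0$. The same reasoning applies inside every subinterval, so $\mu$ vanishes on a dense open set and $K:=\supp\mu$ is closed and nowhere dense; and since $\widehat\mu\to0$ forces $\mu$ to have no atoms, $K$ has no isolated points.

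\emph{Then I would try to propagate this to all of $\mathbb T$.} Suppose $K\ne\emptyset$. Applying the category argument now to the complete space $K$, there are an interval $J$ and a constant $M_1$ with $\emptyset\ne J\cap K\subseteq E_{M_1}$, that is $|S_{n_k}(\mu)|\le M_1$ on $J\cap K$ for all $k$. On each component $(a,b)$ of $J\setminus K$ the measure $\mu$ vanishes, so by the localisation principle in Rajchman's form (\S\ref{subsec:Rajchman}) $S_n(\mu)\to0$ uniformly on compact subsets of $(a,b)$, and the representation $S_n(\mu)(x)=\int_{K}D_n(x-t)\,d\mu(t)$ gives there the bound $|S_n(\mu)(x)|\le C\|\mu\|/\dist(x,K)$, uniformly in $n$. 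The goal is to combine the uniform bound on $J\cap K$ with this control on the gaps to pass to the limit in $\int S_{n_k}(\mu)(x)\overline{\phi(x)}\,dx=\int\overline\phi\,d\mu$ for $\phi\in C^\infty$ supported near a point $\gamma\in J\cap K$, conclude $\int\overline\phi\,d\mu=0$, hence $\gamma\notin\supp\mu$ — a contradiction, giving $K=\emptyset$ and $\mu=0$.

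\emph{The hard part is exactly this last passage to the limit.} The bound $|S_{n_k}(\mu)(x)|\le C\|\mu\|/\dist(x,K)$ is not integrable near the endpoints of the gaps of $K$, so dominated convergence does not apply directly to $\int_{J\setminus K}S_{n_k}(\mu)\overline\phi\,dx$; and since $K$ is perfect there are infinitely many gaps meeting $\supp\phi$, so one cannot merely add up the (individually vanishing) gap contributions either. To get past this one must use the localisation more quantitatively: for $x\notin K$ one can write $D_n(x-t)=e^{i(n+1)x}\psi_x(t)e^{-i(n+1)t}-e^{-inx}\psi_x(t)e^{int}$ with $\psi_x(t)=(e^{i(x-t)}-1)^{-1}$, which is smooth on $\supp\mu$, so that $S_n(\mu)(x)$ is a combination of Fourier coefficients (at $\pm(n+1),\mp n$) of the Rajchman measure $\widetilde\psi_x\mu$, where $\widetilde\psi_x\in C^\infty(\mathbb T)$ agrees with $\psi_x$ on $\supp\mu$; these coefficients tend to $0$ in $n$, and one must trade this decay off against the growth of $\|\widetilde\psi_x\|$ as $x\to K$, together with the continuity of $S_{n_k}(\mu)$ across the gap endpoints (where it is at most $M_1$), to recover enough uniform integrability. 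A possible alternative is to iterate the category step transfinitely, peeling off $K$. Either way I expect essentially all the difficulty to lie here: Steps one and two are routine, and the localisation principle is the only nonelementary ingredient.
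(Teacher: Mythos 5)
Your first Baire step and the setup of the second one match the paper, and you have correctly diagnosed why the approach you then pursue stalls: $S_{n_k}(\mu)$ is only bounded on $K$, the gap bound $C\|\mu\|/\dist(x,K)$ is not integrable across the (infinitely many, accumulating) gaps of $K$ meeting $\supp\phi$, so dominated convergence cannot be applied to $\int S_{n_k}(\mu)\overline\phi$ on a neighbourhood of a point of $K$. The proposed rescues (quantitative localisation with $\widetilde\psi_x$, or a transfinite peeling of $K$) are not carried out and are exactly where the argument would need a new idea.

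The paper takes a genuinely different turn at this point, and that turn is the idea you are missing. It never tries to show $\int\overline\phi\,d\mu=0$ by passing to the limit in the integral over a full neighbourhood. Instead it replaces $\mu$ by $\nu=\varphi\mu$ for a smooth $\varphi$ supported in $I$, and invokes the localisation principle to conclude two things at once: $S_{n_k}(\nu)\to0$ everywhere, and (from uniform equiconvergence with $\varphi\,S_{n_k}(\mu)$ together with your bound $|S_{n_k}(\mu)|\le M_1$ on $I\cap K$) that $S_{n_k}(\nu)$ is uniformly bounded on $\supp\nu=I\cap K$. The decisive step is then Parseval, not dominated convergence:
\[
\sum_{|l|\le n_k}|\widehat\nu(l)|^2=\sum_{l\in\mathbb Z}\overline{\widehat{S_{n_k}(\nu)}(l)}\,\widehat\nu(l)=\int\overline{S_{n_k}(\nu)}\,d\nu\le M_1\|\varphi\|_\infty\cdot\|\nu\|,
\]
which only uses the bound on $\supp\nu$ and needs no control of $S_{n_k}(\nu)$ off the support. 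Letting $k\to\infty$ gives $\widehat\nu\in\ell^2$, hence $\nu\in L^2$; but then $S_n(\nu)\to\nu$ in measure while $S_{n_k}(\nu)\to0$ pointwise, forcing $\nu=0$, contradicting $I\cap K\ne\emptyset$. In short: the paper converts the pointwise bound on the support into an $\ell^2$ bound via Parseval, sidestepping the uniform integrability problem across the gaps of $K$ that blocks your route; that single step is the crux and is absent from your proposal.
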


\begin{proof}
Denote $S_{n}(x)=\sum_{l=-n}^{n}\widehat{\mu}(l)e^{ilx}$. For every
$x\in\supp\mu$ there exists an $M(x)$ such that $|S_{n_{k}}(x)|\le M(x)$
for all $k$ (certainly $M$ exists also for $x\not\in\supp\mu$ but
we will not need it). By the Baire category theorem there is an interval
$I$ and a value $M$ such that the set $\{x:M(x)\le M\}$ is dense
in $I\cap\supp\mu$ (and $I\cap\supp\mu\ne\emptyset$). Note that
we are using the Baire category theorem on the support of $\mu$,
which is compact (here and below support will always mean in the distributional
sense, and in particular will be compact). By continuity, in fact
$M(x)\le M$ for all $x\in I\cap\supp\mu$. Let $\varphi$ be a smooth
function supported on $I$ (and $\varphi(x)\ne0$ for all $x\in I^{\circ}$).
We apply the localisation principle (see, e.g.\ \cite[theorem IX.4.9]{Z})
and get that the series
\[
\varphi(x)\sum\widehat{\mu}(l)e^{ilx}\qquad\text{and}\qquad\sum\widehat{\varphi\mu}(l)e^{ilx}
\]
are uniformly equiconvergent. Hence $\varphi\mu$ satisfies the same
property as $\mu$ i.e.
\[
\lim_{k\to\infty}\sum_{l=-n_{k}}^{n_{k}}\widehat{\varphi\mu}(l)e^{ilx}=0\qquad\forall x\in[0,2\pi]
\]
and further, this convergence is bounded on $\supp\varphi\mu$ (since
$\supp\varphi\mu=I\cap\supp\mu$). If $\mu\ne0$ then also $\varphi\mu\ne0$.

The conclusion of the previous paragraph is that we could have, without
loss of generality, assumed to start with that $S_{n_{k}}$ is bounded
on $\supp\mu$. Let us therefore make this assumption (so we do not
have to carry around the notation $\varphi$). We now argue as follows:
\[
\sum_{l=-n_{k}}^{n_{k}}|\widehat{\mu}(l)|^{2}=\sum_{l=-\infty}^{\infty}\overline{\widehat{S_{n_{k}}}(l)}\cdot\widehat{\mu}(l)=\int\overline{S_{n_{k}}}(x)\,d\mu(x)\le M||\mu||
\]
where the second equality is due to Parseval and where $M$ is again
the maximum of $|S_{n_{k}}|$ on $\supp\mu$. Since this holds for
all $k$, we get that $\sum|\widehat{\mu}(l)|^{2}<\infty$, so $\mu$
is in fact an $L^{2}$ function. But this is clearly impossible, since
the Fourier series of an $L^{2}$ function converges in measure to
it.
\end{proof}
The crux of the proof is that $S_{n_{k}}$ is small where $\mu$ is
supported. The proofs of theorems \ref{thm:doubly exponentially}
and \ref{thm:dim} replace $\mu$ with a different partial sum, $S_{s}$
for some carefully chosen $s$ (roughly, for $s\approx n_{k}^{3/2}$)
and show that $S_{n_{k}}$ is small where $S_{s}$ is essentially
supported. The details are below.

Let us remark that the only place where the condition $\widehat{\mu}(l)\to0$
was used in the proof of proposition \ref{prop:measure} is in the
application of the localisation principle. This can be circumvented,
with a slightly more involved argument. See details in \S \ref{sec:Localisation}.
Similarly theorems \ref{thm:doubly exponentially} and \ref{thm:dim}
may be generalised from $c_{l}\to0$ to $c_{l}$ bounded, at the expense
of a more involved use of the localisation principle. 

\section{Construction}

It will be convenient to work in the interval $[0,1]$ and not carry
around $\pi$-s, so define 
\[
e(x)=e^{2\pi ix}.
\]
For an integrable function $f$ we define the usual Fourier partial
sums,
\[
S_{n}(f;x)=\sum_{l=-n}^{n}\widehat{f}(l)e(lx).
\]
In this paper ``smooth'' means $C^{2}$, but the proofs work equally
well with higher smoothness (up to the quasianalytic threshold). We
use $C$ and $c$ to denote arbitrary constants, whose value might
change from line to line or even inside the same line. We use $C$
for constants which are large enough, and $c$ for constants which
are small enough. We use $||\cdot||$ for the $L^{2}$ (or $\ell^{2}$)
norm, other $L^{p}$ norms are denoted by $||\cdot||_{p}$ (except
one place in the introduction where we used $||\mu||$ for the norm
of the measure $\mu$). For a set $E\subset[0,1]$ we denote by $|E|$
the Lebesgue measure of $E$.

\subsection{The localisation principle\label{subsec:Rajchman}}

Let us recall Riemann's localisation principle: as formulated by Riemann,
it states that the convergence of a trigonometric series at a point
$x$ depends only on the behaviour of the Riemann function at a neighbourhood
of $x$. See \cite[\S IX.4]{Z}. Rajchman found a formulation of the
principle which does not use the Riemann function and has a simple
proof. It states that for any $c_{l}\to0$ and any smooth function
$\varphi$,
\begin{equation}
\varphi(x)\sum c_{l}e(lx)\text{ and }\sum(c*\widehat{\varphi})(l)e(lx)\text{ are uniformly equiconvergent}\label{eq:Rajchman}
\end{equation}
where $c*\widehat{\varphi}$ is a discrete convolution. See \cite[theorem IX.4.9]{Z},
or the proof of theorem \ref{thm:local} below, which follows Rajchman's
approach precisely. We will use Rajchman's theorem both on and off
the support of $\sum c_{l}e(lx)$ (denote this support by $K$). Off
$K$, it has the following nice formulation: if $c_{l}\to0$ then
\begin{equation}
\sum c_{l}e(lx)=0\qquad\forall x\not\in K.\label{eq:KS}
\end{equation}
and further, convergence is uniform on any closed interval disjoint
from $K$. To the best of our knowledge, this precise formulation
first appeared in \cite[Proposition 1, \S V.3, page 54]{KS94}.

\subsection{First estimates}
\begin{lem}
\label{lem:willthisbetheendofthisproblematlast}For every $\epsilon>0$
there exists a smooth function $u:[0,1]\to\mathbb{R}$ with $u(0)=u(1)=0$,
$u(x)\in[0,1]$, and $||\widehat{u-1}||_{\infty}<\epsilon$.
\end{lem}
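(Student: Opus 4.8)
The plan is to reduce the statement to a trivial $L^{1}$ bound and then exhibit an explicit smoothed indicator. First I would note that for any integrable $f$ on $[0,1]$ and any $l\in\mathbb{Z}$ one has $|\widehat{f}(l)|=\bigl|\int_{0}^{1}f(x)e(-lx)\,dx\bigr|\le\|f\|_{1}$, hence $\|\widehat{u-1}\|_{\infty}\le\|u-1\|_{1}$. So it suffices to construct a smooth $u\colon[0,1]\to[0,1]$ with $u(0)=u(1)=0$ and $\|u-1\|_{1}<\epsilon$; note that the only thing forcing $\|u-1\|_{1}>0$ at all is the boundary condition $u(0)=u(1)=0$.

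Next, since $0\le u\le 1$ gives $|u(x)-1|=1-u(x)\le 1$, with equality to $0$ wherever $u(x)=1$, the integral $\|u-1\|_{1}$ is at most the Lebesgue measure of the set $\{x\in[0,1]:u(x)\neq 1\}$. Thus the task reduces to producing a $C^{2}$ function on $[0,1]$ taking values in $[0,1]$, vanishing at both endpoints, and equal to $1$ outside a set of measure $<\epsilon$.

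For the construction, fix $\delta=\epsilon/3$ and let $\psi\colon[0,1]\to[0,1]$ be any fixed $C^{2}$ function with $\psi(0)=0$, $\psi(1)=1$ and $\psi'(0)=\psi'(1)=\psi''(0)=\psi''(1)=0$ (for instance $\psi(t)=10t^{3}-15t^{4}+6t^{5}$, or a suitably normalised antiderivative of a bump function). Define $u$ to equal $\psi(x/\delta)$ on $[0,\delta]$, to equal $1$ on $[\delta,1-\delta]$, and to equal $\psi((1-x)/\delta)$ on $[1-\delta,1]$. The vanishing of $\psi'$ and $\psi''$ at the endpoints makes $u$ a $C^{2}$ function on all of $[0,1]$; by construction $u$ takes values in $[0,1]$ and $u(0)=u(1)=0$; and $\{x:u(x)\neq 1\}\subseteq[0,\delta)\cup(1-\delta,1]$ has measure $2\delta<\epsilon$. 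Combining with the two previous paragraphs gives $\|\widehat{u-1}\|_{\infty}\le 2\delta<\epsilon$, as required.

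I do not expect any genuine obstacle here; the only point needing a line of care is arranging that the glued function is honestly $C^{2}$ rather than merely continuous, which is precisely why one demands that the first and second derivatives of the transition profile $\psi$ vanish at its endpoints.
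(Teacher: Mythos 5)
Your proof is correct and follows essentially the same route as the paper: reduce to the trivial bound $\|\widehat{u-1}\|_{\infty}\le\|u-1\|_{1}$ and then take a smooth cutoff equal to $1$ outside a set of measure less than $\epsilon$. The paper states this in two lines; you have merely spelled out the gluing and the vanishing of the first two derivatives at the endpoints, which is also what the paper implicitly requires for the periodic extension of $u$ to be $C^{2}$.
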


When we say that $u$ is smooth we mean also when extended periodically
(or when extended by $0$, which is the same under the conditions
above).
\begin{proof}
Take any standard construction of a smooth function satisfying $u(0)=u(1)=0$,
$u(x)\in[0,1]$ and $u(x)=1$ for all $x\in[\frac{1}{2}\epsilon,1-\frac{1}{2}\epsilon]$.
The condition on the Fourier coefficients then follows by $||\widehat{u-1}||_{\infty}\le||u-1||_{1}$.
\end{proof}
\begin{lem}
\label{lem:vandermonde}For every $\epsilon>0$ there exists a smooth
function $h:[0,1]\to\mathbb{R}$ and an $n\in\mathbb{N}$ such that
\begin{enumerate}
\item $\widehat{h}(0)=1$
\item $\supp h\subset[0,\frac{1}{2}]$
\item For all $x\in[0,\frac{1}{2}]$, $|S_{n}(h;x)|<\epsilon$.
\end{enumerate}
\end{lem}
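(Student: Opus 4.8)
The plan is to construct $h$ in the form $h=\phi p$, where $\phi$ is a fixed non-negative smooth bump supported in the open interval $(0,\tfrac12)$ (so that $h$ is automatically smooth, also periodically, and supported in $[0,\tfrac12]$ regardless of the polynomial factor) and $p$ is a trigonometric polynomial of degree $\le n$ chosen so that the frequencies $|l|\le n$ of $h$ reproduce a good summability kernel that is concentrated \emph{outside} $[0,\tfrac12]$. The point to keep in mind is that one cannot take a fixed $h$ and let $n\to\infty$: since $S_{n}(h)\to h$ uniformly and $\widehat h(0)=1$ forces $\int_{0}^{1/2}h=1$, the sums $S_{n}(h)$ are never uniformly small on $[0,\tfrac12]$ for a fixed $h$. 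So $h$ must be allowed to depend on $n$.

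First I would fix the target. Let $K_{n}$ be the Fej\'er kernel of degree $n$ and put $P(x)=K_{n}(x-\tfrac34)$. Then $P$ is a real trigonometric polynomial of degree $n$ with $\widehat P(0)=1$, and from $|K_{n}(t)|\le\bigl((n+1)\sin^{2}\pi t\bigr)^{-1}$ together with $\operatorname{dist}(x-\tfrac34,\mathbb Z)\ge\tfrac14$ for $x\in[0,\tfrac12]$ one gets $|P(x)|\le\tfrac{2}{n+1}$ on $[0,\tfrac12]$. Choosing $n$ with $\tfrac{2}{n+1}<\epsilon$, it now suffices to produce an $h$ supported in $[0,\tfrac12]$ with $\widehat h(l)=\widehat P(l)$ for all $|l|\le n$: then $S_{n}(h)=P$ on the nose (as $\deg P\le n$), so $|S_{n}(h;x)|=|P(x)|<\epsilon$ on $[0,\tfrac12]$, while $\widehat h(0)=\widehat P(0)=1$.

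For the realisation step, consider the linear map $\Lambda$ sending a trigonometric polynomial $p=\sum_{|k|\le n}b_{k}e(kx)$ to the vector $\bigl(\widehat{\phi p}(l)\bigr)_{|l|\le n}$. In the exponential basis its matrix is the Toeplitz (Gram) matrix $M=\bigl(\widehat\phi(l-k)\bigr)_{|l|,|k|\le n}$, which is Hermitian (because $\phi$ is real) and positive definite, since for $\xi\ne0$
\[
\xi^{*}M\xi=\int_{0}^{1}\phi(x)\,\Bigl|\sum_{|k|\le n}\xi_{k}e(kx)\Bigr|^{2}\,dx>0,
\]
as $\phi\ge0$ is not identically zero and a nonzero trigonometric polynomial vanishes only on a null set. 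Hence $M$ is invertible, so there is a $p$ with $\widehat{\phi p}(l)=\widehat P(l)$ for all $|l|\le n$; set $h=\phi p$ (and replace it by its real part, which is harmless: it does not change $S_{n}(h)$ as a function on $\mathbb R$ because $P$ is real, nor does it change $\widehat h(0)=1$). Then $h$ is smooth, $\supp h\subset\supp\phi\subset[0,\tfrac12]$, and $S_{n}(h)=P$, which is what we wanted.

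I do not anticipate a genuine obstacle. The one thing that needs care is exactly this realisation — that one may freely prescribe the $2n+1$ lowest Fourier coefficients of a smooth function living in a fixed subinterval — and the clean way to see it is the positive-definiteness (hence nonvanishing determinant) of the Gram matrix above, which is presumably the ``Vandermonde''-type fact the lemma's name alludes to. The rest is the standard Fej\'er-kernel estimate and routine bookkeeping (degrees, reality, support).
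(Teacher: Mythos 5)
Your proposal is correct, and the overall architecture matches the paper's: pick a degree-$n$ trigonometric polynomial $P$ with $\widehat P(0)=1$ that is small on $[0,\tfrac12]$, then build a smooth $h$ supported in $[0,\tfrac12]$ whose Fourier coefficients on $\{|l|\le n\}$ coincide with $\widehat P$, so that $S_n(h)=P$. Where you differ is the realisation step, and it is a genuinely different mechanism. The paper takes $h(x)=\sum_{j=0}^{2n}a_j\,q\bigl(x-\tfrac{j}{2(2n+1)}\bigr)$ with $q$ a thin bump; the coefficients $a_j$ are obtained by inverting the genuine Vandermonde matrix $\bigl(e(-jk/2m)\bigr)$ (this is literally the ``vandermonde'' the lemma's label refers to, not a Gram-type fact as you speculated). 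You instead take $h=\phi p$ with a single bump $\phi$ and a polynomial $p$, and invert the Hermitian Toeplitz matrix $M=\bigl(\widehat\phi(l-k)\bigr)_{|l|,|k|\le n}$, with invertibility coming from positive definiteness (since $\phi\ge 0$, $\phi\not\equiv 0$). Both arguments are sound; yours has the mild advantage that positive definiteness gives invertibility with no need to know $\widehat q(k)\ne 0$ for $|k|\le n$, while the paper's translate construction more visibly generalises (it is the engine behind Lemma \ref{lem:no g}, where translated copies at \emph{different} dilation rates are used). One more small difference: you make $P$ explicit via a shifted Fej\'er kernel, whereas the paper just asserts such a $P$ exists and, in the remark following, explains how to make a near-optimal choice via Szeg\H{o}'s theorem; since the lemma here does not need quantitative control, your explicit Fej\'er choice is perfectly adequate. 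Your observation that taking the real part of $h$ is harmless is also correct (the paper glosses over reality; in their setup conjugate-symmetry of the Vandermonde system actually forces the $a_j$ to be real).
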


\begin{proof}
Let $P$ be an arbitrary trigonometric polynomial satisfying that
$\widehat{P}(0)=1$ and $|P(x)|<\epsilon$ for all $x\in[0,\frac{1}{2}]$.
Let $n=\deg P$, let $m=2n+1$ and let $q$ be a smooth function supported
on $[0,\nicefrac{1}{2m}]$ with $\widehat{q}(k)\ne0$ for all $|k|\le n$.
Examine a function $h$ of the type
\[
h(x)=\sum_{j=0}^{m-1}a_{j}q\Big(x-\frac{j}{2m}\Big).
\]
Then $h$ is smooth, supported on $[0,\frac{1}{2}]$, and its Fourier
coefficients are given by
\[
\widehat{h}(k)=\widehat{q}(k)\sum_{j=0}^{m-1}a_{j}e(-jk/2m).
\]
The matrix $\{e(-jk/2m):j\in\{0,\dotsc,m-1\},k\in\{-n,\dotsc,n\}\}$
is a Vandermonde matrix hence invertible, so one may find $a_{j}$
such that $\sum a_{j}e(-jk/2m)\linebreak[0]=\widehat{P}(k)/\widehat{q}(k)$
for all $k\in\{-n,\dotsc,n\}$. With these $a_{j}$ our $h$ satisfies
$\widehat{h}(k)=\widehat{P}(k)$ for all $k$ such that $|k|\le n$
so $S_{n}(h)=P$ which has the required properties.
\end{proof}
\begin{rem*}
The coefficients of the $h$ given by lemma \ref{lem:vandermonde}
are typically large. The reason is the Vandermonde matrix applied.
We need to invert the Vandermonde matrix and its inverse has a large
norm, exponential in $n$ (the inverse of a Vandermonde matrix has
an explicit formula). To counterbalance this last sentence a little,
let us remark that $n$, the degree of the polynomial $P$ used during
the proof can be taken to be logarithmic in $\epsilon$. This requires
to choose a good $P$. For this purpose we apply the following theorem
of Szeg\H{o}: for every compact $K\subset\mathbb{C}$ there exists
monic polynomials $Q_{n}$ with $\max_{x\in K}|Q_{n}(x)|=(\capa(K)+o(1))^{n}$.
See \cite[corollary 5.5.5]{R95}. We apply Szeg\H{o}'s theorem with
$K=\{e(x):x\in[0,\nicefrac{1}{2}]\}$ and then define $P_{n}(x)=\textrm{Re}(e(-nx)Q_{n}(e(x)))$.
We get that $\widehat{P_{n}}(0)=1$ and $\max_{x\in[0,1/2]}|P_{n}(x)|\le(\capa(K)+o(1))^{n}$.
The capacity of $K$ can be calculated by writing explicitly a Riemann
mapping between $\mathbb{C}\setminus K$ and $\{z:|z|>1\}$ and is
$\nicefrac{1}{\sqrt{2}}$, and in particular smaller than 1 (see \cite[theorem 5.2.3]{R95}
for the connection to Riemann mappings). Hence it is enough to take
$n=C\log\nicefrac{1}{\epsilon}$ to ensure that $P$ would satisfy
$|P(x)|\le\epsilon$ for all $x\in[0,\nicefrac{1}{2}]$. With this
$P$ the norm of $h$ would be polynomial in $\epsilon$.
\end{rem*}

\subsection{Reducing the coefficients}

In the next lemma we reduce the Fourier coefficients using a method
inspired by a proof of the Menshov representation theorem (see \cite{O85}).
We separate the interval $[0,1]$ into many small pieces and on each
put a copy of the $h$ above, scaled differently. Unlike in typical
applications of Menshov's approach, we do not have each copy of $h$
sit in a distinct ``spectral interval'' but they are rather intertwined.
The details are below. Still, like in other applications of Menshov's
technique, the resulting set is divided into many small intervals
in a way that pushes the dimension up. This is why we are unable to
construct an example supported on a set with dimension less than $1$.
\begin{lem}
\label{lem:no g}For every $\epsilon>0$ there exists a smooth function
$f:[0,1]\to\mathbb{R}$ and an $n\in\mathbb{N}$ with the following
properties:
\begin{enumerate}
\item $\widehat{f}(0)=1$.
\item For all $k\ne0$, $|\widehat{f}(k)|<\epsilon$.
\item For every $x\in\supp f,$ $|S_{n}(f;x)|<\epsilon$.
\end{enumerate}
\end{lem}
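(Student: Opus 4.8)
The plan is to carry out the Menshov-type scheme announced in the paragraph before the statement. Fix a small auxiliary parameter $\delta=\delta(\epsilon)$, to be pinned down only at the very end, and apply Lemma~\ref{lem:vandermonde} with $\epsilon$ replaced by $\delta$: this gives a smooth $h$ and an integer $n_1$ with $\widehat h(0)=1$, with $\supp h$ contained in a short interval, and with $|S_{n_1}(h;x)|<\delta$ throughout $\supp h$. I would then chop $[0,1]$ into $N$ short intervals $I_1,\dots,I_N$ and build
\[
f=\frac1N\sum_{j=1}^{N}h_j ,
\]
where $h_j$ is a copy of $h$ translated, and rescaled (and, if needed, also frequency-modulated), so that $\supp h_j\subset I_j$; thus the $h_j$ have pairwise disjoint supports and $\supp f=\bigsqcup_j\supp h_j$. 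The free parameters are $N$, the sizes and positions of the $I_j$, the scaling factors, the modulation frequencies, and the truncation level $n$. Normalising each $h_j$ to carry Fourier mass $1/N$ makes $\widehat f(0)=\frac1N\sum_j\widehat{h_j}(0)=1$, so property (i) is automatic.

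For property (iii) I would use that $S_n$ is linear and commutes with translation (and, if the scalings and modulations are taken to be integers, with those too), so that $S_n(f;x)=\frac1N\sum_j S_n(h_j;x)$. Given $x\in\supp f$, exactly one index $j_0$ is ``active'', i.e.\ $x\in\supp h_{j_0}$, and for a truncation level $n$ chosen compatibly with the degree $n_1$ on the scale of the $j_0$-th piece the diagonal contribution reduces to $S_{n_1}(h;\cdot)$ evaluated inside $\supp h$, hence is $<\delta/N$ by Lemma~\ref{lem:vandermonde}. Each of the $N-1$ off-diagonal terms has $x\notin\supp h_j$ and is handled by the localisation principle \eqref{eq:Rajchman}--\eqref{eq:KS} applied to the smooth, compactly supported $h_j$: it is small once $n$ is large relative to the mesh of the partition and to the gaps between the pieces, and summing $N-1$ of them forces me to control each at level $\lesssim\epsilon/N$ --- which is what dictates how large $n$ must be. So property (iii) is just a matter of getting the order of the choices right ($\delta$; then $h$ and $n_1$; then the partition and the copies $h_j$; then $n$).

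The real content, and what I expect to be the main obstacle, is property (ii): forcing $|\widehat f(k)|<\epsilon$ for every $k\ne0$. The difficulty is precisely the one isolated in the Remark after Lemma~\ref{lem:vandermonde} --- the Fourier coefficients of $h$ are enormous (exponentially large in $n_1$), so a single copy $h_j$, even rescaled and divided by $N$, dumps a non-negligible amount onto a whole block of frequencies, and the triangle inequality over the $N$ copies is hopeless; it cannot even be salvaged by enlarging $N$, since packing more disjoint pieces into $[0,1]$ forces them onto finer scales on which the corresponding copies of $h$ become correspondingly spikier. Hence the pieces must be placed so that their contributions to any one frequency essentially do not pile up. Concretely, I would choose the translation parameters $a_j$ of the pieces (and, where translations alone do not suffice, their modulation frequencies) so that for each $k$ in the finite ``dangerous'' range --- the frequencies at which the rescaled $h$ still has non-negligible transform --- the relevant exponential sum $\sum_j e(-ka_j)$ exhibits cancellation of square-root type; this asks for a pseudo-random, or low-discrepancy but deliberately arithmetic-structure-free, placement of the pieces (a placement on an arithmetic progression would be fatal, because of resonances at multiples of $N$). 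For $k$ outside the dangerous range $|\widehat f(k)|<\epsilon$ is automatic from the smoothness of $h$ and the resulting decay of $\widehat h$. Reconciling this spectral spreading with the disjointness of the supports and with the partial-sum bound of property (iii), and then checking that the whole tower of parameter choices closes up without circularity, is the technical heart of the lemma.
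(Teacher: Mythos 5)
Your overall Menshov-type scheme (chop $[0,1]$ into many pieces, put a localised copy of the $h$ from Lemma~\ref{lem:vandermonde} on each, and tune the placement so that the enormous coefficients of $h$ do not pile up) is exactly the paper's plan, and your treatment of (i) and the diagonal part of (iii) is on track. But the specific device you propose for (ii) --- translated (or translated-and-modulated) copies $h_j$ with pseudorandomly chosen offsets $a_j$, hoping for square-root cancellation in $\sum_j e(-ka_j)$ --- has a gap that cannot be closed. Because $\supp h_j\subset I_j$ forces $a_j=j/N+b_j$ with $b_j\in[0,1/(2N)]$, at the frequency $k=N$ the phases $e(-Na_j)=e(-Nb_j)$ all lie in a half-circle and do \emph{not} cancel: $\bigl|\sum_j e(-Na_j)\bigr|\gtrsim N$. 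Since $\widehat{h_j}(N)$ is, up to a phase, $\tfrac1N\widehat h(1)$, which Lemma~\ref{lem:vandermonde} does not control and may be huge, you get $|\widehat f(N)|\gtrsim|\widehat h(1)|$ and (ii) fails. Adding modulations $e(M_jx)$ with $|M_j|$ large enough to move $h_j$ off the dangerous frequencies destroys the DC normalisation ($\widehat{h_j}(0)$ becomes small) and so breaks (i); small $|M_j|$ does not help at $k\approx N$. There is also a secondary issue in your (iii): for compactly supported rescaled copies, $S_n(h_j;\cdot)$ is a sampled version of the Fourier transform of $h$, not literally $S_{n_1}(h;\cdot)$ evaluated on the rescaled variable, so the reduction to Lemma~\ref{lem:vandermonde} is not exact.

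The paper avoids both problems by a different localisation: it sets $f(x)=\sum_{j=0}^{a-1} v(x-j/a)\,h(x(r^3+jr))$, i.e.\ it uses \emph{periodised dilates} $h(\lambda_j x)$ with distinct integer dilations $\lambda_j=r^3+jr$, cut off in space by a single bump $v$. Since $\widehat{h(\lambda_j\cdot)}$ lives on the lattice $\lambda_j\mathbb Z$, the nonzero spectra of the pieces are \emph{deterministically disjoint} outside $(-r/2,r/2)$ for $r>a$, so one just takes a max rather than relying on cancellation; at the same time all pieces coincide at frequency $0$, which preserves $\widehat f(0)$; and $S_n(h(\lambda_j\cdot);x)=S_{\lfloor n/\lambda_j\rfloor}(h;\lambda_j x)$ \emph{exactly}, so (iii) reduces cleanly to Lemma~\ref{lem:vandermonde} after choosing $n=m(r^3+r^2)$ to sit between the $m$-th and $(m+1)$-st blocks of every $\lambda_j\mathbb Z$ simultaneously --- this is the reason for the specific form $r^3+jr$. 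The low frequencies $|l|<r/2$ are then handled separately by the properties of the bump $u$ (via $v(x)=u(ax)$). So the missing idea is: replace pseudorandom phases by distinct dilation rates, which buys exact spectral disjointness without sacrificing the DC mass.
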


\begin{proof}
We may assume without loss of generality that $\epsilon<\frac{1}{2}$,
and it is enough to replace requirement (i) by the weaker requirement
$|\widehat{f}(0)-1|<\epsilon$ (and then normalise).

\emph{1. }Let $h$ be the function given by lemma \ref{lem:vandermonde}
with $\epsilon_{\text{lemma \ref{lem:vandermonde}}}=\epsilon/4$,
and denote $m=n_{\text{lemma \ref{lem:vandermonde}}}$. In other words,
$h$ satisfies
\begin{gather*}
\widehat{h}(0)=1\qquad\qquad\supp h\subset[0,\tfrac{1}{2}]\\
|S_{m}(h;x)|<\tfrac{1}{4}\epsilon\quad\forall x\in[0,\tfrac{1}{2}].
\end{gather*}
Let $a>2||h||_{1}$/$\epsilon$ be some integer. Let $u$ be the
function given by lemma \ref{lem:willthisbetheendofthisproblematlast}
with $\epsilon_{\textrm{lemma \ref{lem:willthisbetheendofthisproblematlast}}}=\epsilon/2$
i.e.\ $u$ is smooth from $[0,1]$ to $[0,1]$, $u(0)=u(1)=0$ and
$u$ satisfies 
\[
||\widehat{u-1}||_{\infty}<{\textstyle \frac{1}{2}}\epsilon.
\]
Let $v(x)=u(xa)$ (extended to zero outside $[0,1/a]$). Let $r$
be a large integer parameter to be fixed later, depending on all previously
defined quantities ($\epsilon$, $h$, $m$, $a$ and $u$). Define
\[
f(x)=\sum_{j=0}^{a-1}v\Big(x-\frac{j}{a}\Big)h(x(r^{3}+jr)).
\]
The role of the quantities $r^{3}+jr$ will become evident later. 

Let us see that $f$ satisfies all required properties. It will be
easier to consider trigonometric polynomials rather than smooth functions
so define
\begin{gather}
\begin{aligned}H & \colonqq S_{\lfloor r/2\rfloor-1}(h)\qquad\qquad\qquad & V & \colonqq S_{\lfloor r/2\rfloor-1}(v)\end{aligned}
\nonumber \\
F(x)\colonqq\sum_{j=0}^{a-1}V\Big(x-\frac{j}{a}\Big)H(x(r^{3}+jr)).\label{eq:def f'}
\end{gather}
The smoothness of $v$ and $h$ imply that $||\widehat{v-V}||_{1}$
and $||\widehat{h-H}||_{1}$ can be taken arbitrarily small as $r\to\infty$.
Since
\[
||\widehat{f-F}||_{1}\le\sum_{j=0}^{a-1}||\widehat{v-V}||_{1}||\widehat{h}||_{1}+||\widehat{V}||_{1}||\widehat{h-H}||_{1}
\]
we may take $r$ sufficiently large and get $||\widehat{f-F}||_{1}<\frac{1}{2}\epsilon$
(but do not fix the value of $r$ yet). Thus, with such an $r$, we
need only show
\begin{enumerate}
\item $||\widehat{F-1}||_{\infty}<\frac{1}{2}\epsilon$
\item For every $x\in\supp f$, $|S_{n}(F;x)|<\frac{1}{2}\epsilon$ (note
that we take $x$ in $\supp f$ and not in $\supp F$).
\end{enumerate}
\emph{2}. We start with the estimate of $\widehat{F-1}$. Examine
one summand in the definition of $F$, (\ref{eq:def f'}). Denoting
$G_{j}=V(x-j/a)H(x(r^{3}+jr))$ we have
\begin{equation}
\widehat{G_{j}}(l)=\begin{cases}
\widehat{V}(p)\widehat{H}(q)e(-pj/a) & l=p+q(r^{3}+jr),\;|p|,|q|<r/2\\
0 & \text{otherwise}.
\end{cases}\label{eq:gj hat}
\end{equation}
In particular, $l$ and $j$ determine $p$ and $q$ uniquely. An
immediate corollary is:
\begin{equation}
||\widehat{G_{j}}||_{\infty}=||\widehat{V}||_{\infty}||\widehat{H}||_{\infty}\le||v||_{1}||h||_{1}\le\frac{||h||_{1}}{a}<\frac{\epsilon}{2}\label{eq:gj hat infty}
\end{equation}
where the last inequality is from the definition of $a$. Assume now
that $r>a$. Then we can extract another corollary from (\ref{eq:gj hat}):
that the different $G_{j}$ have disjoint spectra, except at $(-r/2,r/2)$.
Hence 
\begin{equation}
|\widehat{F}(l)|=\max_{j}|\widehat{G_{j}}(l)|\stackrel{\textrm{(\ref{eq:gj hat infty})}}{<}\frac{\epsilon}{2}\qquad\forall|l|\ge r/2.\label{eq:l>r/2}
\end{equation}
Finally, for $l\in(-r/2,r/2)$ we have that $F$ ``restricted spectrally
to $(-r/2,\linebreak[0]r/2)$'' is simply $\sum_{j}V(x-j/a)$ so
its Fourier spectrum is simply that of $u$ spread out. Since $||\widehat{u-1}||<\frac{\epsilon}{2}$
we get also in this case $|\widehat{F-1}(l)|<\frac{1}{2}\epsilon$.
For those who prefer formulas, just note in (\ref{eq:gj hat}) that
if $l\in(-r/2,r/2)$ then $q=0$ and since $\widehat{H}(0)=1$ we
get
\[
\widehat{F}(l)=\sum_{j=0}^{a-1}\widehat{V}(l)e(-lj/a)=\begin{cases}
a\widehat{V}(l) & l\equiv0\text{ mod }a\\
0 & \text{otherwise}.
\end{cases}
\]
Recall that $v(x)=u(xa)$ so for $l\equiv0$ mod $a$ we have 
\[
|\widehat{aV-1}(l)|\le|\widehat{av-1}(l)|=|\widehat{u-1}(l/a)|<\tfrac{1}{2}\epsilon.
\]
With (\ref{eq:l>r/2}) we get $||\widehat{F-1}||_{\infty}<\frac{1}{2}\epsilon$,
as needed.

\emph{3}. Finally, we need to define $n$ and see that $S_{n}(F)$
is small on $\supp f$. Assume $r>m$ and define 
\[
n=m(r^{3}+r^{2}).
\]
This value of $n$ has the property that
\begin{align*}
n & >m(r^{3}+jr)+r/2\\
n & <(m+1)(r^{3}+jr)-r/2
\end{align*}
for all $j\in\{0,\dotsc,a-1\}$. We now see why it was important to
choose the spacings of the arithmetic progressions to be $r^{3}+jr$:
these spacings need to be different to have separation of the spectra
of the different $G_{j}$ (and they must be different by at least
$r$, because the spectra of the $G_{j}$ are arranged in blocks of
size $r$), but they need to be sufficiently close that it would still
be possible to ``squeeze'' an $n$ between all the terms that correspond
to the $m^{\textrm{th}}$ block in all $G_{j}$ and all the terms
that correspond to the $m+1^{\textrm{st}}$ blocks. The $r^{3}$ in
the spacings ensures that.

Using (\ref{eq:gj hat}) gives that 
\[
S_{n}(G_{j};x)=S_{m}\big(H;x(r^{3}+jr)\big)\cdot V\Big(x-\frac{j}{a}\Big).
\]
At this point it will be easier to compare to $v$ rather than to
$V$, so write
\[
S_{n}(G_{j};x)=S_{m}\big(H;x(r^{3}+jr)\big)\cdot v\Big(x-\frac{j}{a}\Big)+E_{j}
\]
and note that for $r$ sufficiently large $E_{j}$ can be taken to
be arbitrarily small. Take $r$ so large as to have
\begin{equation}
\bigg|S_{n}(F;x)-\sum_{j=0}^{a-1}S_{m}(H;x(r^{3}+jr))v\Big(x-\frac{j}{a}\Big)\bigg|<\tfrac{1}{4}\epsilon\qquad\forall x\in[0,1].\label{eq:h tag u notag}
\end{equation}
This is our last requirement from $r$ and we may fix its value now. 

For every $x\in[0,1]$ there is at most one $j_{0}$ such that $v(x-j_{0}/a)\ne0$,
namely $j_{0}=\lfloor x/a\rfloor$. If $x\in\supp f$ then it must
be the case that $x(r^{3}+j_{0}r)\in[0,\frac{1}{2}]$ mod 1. But in
this case, by our definition, 
\[
|S_{m}(H;x(r^{3}+j_{0}r))|<\tfrac{1}{4}\epsilon.
\]
We get
\[
x\in\supp f\implies\bigg|\sum_{j=0}^{a-1}S_{m}(H;x(r^{3}+jr))\cdot v\Big(x-\frac{j}{a}\Big)\bigg|<\tfrac{1}{4}\epsilon,
\]
and with (\ref{eq:h tag u notag}) we get $|S_{n}(F;x)|<\frac{1}{2}\epsilon$,
as needed.
\end{proof}
\begin{lem}
\label{lem:yes g}Let $f:[0,1]\to\mathbb{R}$ be smooth, $\epsilon>0$
and $N\in\mathbb{N}$. Then there exists a smooth function $g:[0,1]\to\mathbb{R}$
satisfying
\begin{enumerate}
\item $\supp g\subseteq\supp f$.
\item \label{enu:g-f^}For all $n\in\mathbb{Z}$, $|\widehat{g}(n)-\widehat{f}(n)|<\epsilon$
\item \label{enu:Sng small}For some $n>N$ we have 
\[
|S_{n}(g;x)|<\epsilon\qquad\forall x\in\supp g.
\]
\end{enumerate}
\end{lem}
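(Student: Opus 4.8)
The plan is to take $g:=f\Psi$, where $\Psi$ is the function produced by Lemma \ref{lem:no g} for a small parameter $\delta>0$ that will be fixed at the very end in terms of $\epsilon$ and $f$. I would not invoke Lemma \ref{lem:no g} as a black box but re-open its proof, which builds $\Psi(x)=\sum_{j=0}^{a-1}v(x-j/a)\,h(x(r^{3}+jr))$ with $v$ supported in $[0,1/a]$, $h$ supported in $[0,\tfrac12]$, $\widehat h(0)=1$ and $|S_{m}(h;\cdot)|<\delta/4$ on $[0,\tfrac12]$, and which yields $\widehat\Psi(0)=1$, $|\widehat\Psi(k)|<\delta$ for $k\ne0$, and $|S_{n}(\Psi;\cdot)|<\delta$ on $\supp\Psi$ for $n=m(r^{3}+r^{2})$ and $r$ large. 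Multiplying by $f$ just replaces each bump $v(x-j/a)$ by $f(x)v(x-j/a)$, so $g$ is smooth and real-valued and $g(x)=\sum_{j}f(x)v(x-j/a)h(x(r^{3}+jr))$. Property (i) is then immediate, since $\supp g\subseteq\supp f\cap\supp\Psi\subseteq\supp f$; throughout we may assume $f\not\equiv0$.

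For (ii) I would use $\|\widehat{\Psi-1}\|_{\infty}<\delta$, which is exactly Lemma \ref{lem:no g}(i)--(ii). Writing $\widehat g-\widehat f=\widehat f*\widehat{\Psi-1}$ and applying Young's inequality gives $\|\widehat g-\widehat f\|_{\infty}\le\|\widehat f\|_{1}\|\widehat{\Psi-1}\|_{\infty}<\delta\|\widehat f\|_{1}$, where $\|\widehat f\|_{1}<\infty$ because $f$ is smooth; so it suffices to take $\delta<\epsilon/\|\widehat f\|_{1}$.

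For (iii) I would re-run the partial-sum estimate at the end of the proof of Lemma \ref{lem:no g}, replacing every occurrence of $v(x-j/a)$ by $f(x)v(x-j/a)$. The same spectral-separation argument — replacing the smooth factors by the trigonometric truncations $W_{j}=S_{\lfloor r/2\rfloor-1}\bigl(f\cdot v(\cdot-j/a)\bigr)$ and $H=S_{\lfloor r/2\rfloor-1}(h)$, whose Fourier coefficients differ from the originals by $o(1)$ in $\ell^{1}$ as $r\to\infty$, and noting $S_{m}(H;\cdot)=S_{m}(h;\cdot)$ — shows that for $r$ large,
\[
\Bigl|S_{n}(g;x)-\sum_{j=0}^{a-1}f(x)v\Bigl(x-\tfrac{j}{a}\Bigr)S_{m}\bigl(H;x(r^{3}+jr)\bigr)\Bigr|<\tfrac12\epsilon\qquad\forall x\in[0,1],\quad n=m(r^{3}+r^{2}).
\]
For any given $x$ only the term $j_{0}=\lfloor ax\rfloor$ survives, and if $x\in\supp g\subseteq\supp\Psi$ then $x(r^{3}+j_{0}r)\in[0,\tfrac12]$ mod $1$ exactly as in Lemma \ref{lem:no g}, so $|S_{m}(H;x(r^{3}+j_{0}r))|<\delta/4$ and that term is at most $\|f\|_{\infty}\delta/4$ in absolute value. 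Taking $\delta$ also smaller than $2\epsilon/\|f\|_{\infty}$ gives $|S_{n}(g;x)|<\epsilon$ on $\supp g$; and since $r$ is still free here I would additionally demand $r$ large enough that $n=m(r^{3}+r^{2})>N$, which is compatible with all the earlier "$r$ large" requirements.

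The one genuinely delicate point is the decision to re-open Lemma \ref{lem:no g} rather than to quote it together with Rajchman's localisation principle: the latter would give $S_{n}(f\Psi;x)\approx f(x)S_{n}(\Psi;x)$ only once $n$ passes an equiconvergence threshold that itself depends on $\Psi$, whose high frequencies get worse as the internal parameter $r$ grows, so one cannot evaluate it at precisely the $n$ that Lemma \ref{lem:no g} returns (and it would also seem to require slightly more smoothness of $f$). Re-running the explicit construction sidesteps this, because there every estimate is of the harmless form "take $r$ large" while $\delta,h,m,a,u,v$ are all fixed before $r$, so no circularity can arise; the remaining work is just the error-term bookkeeping already present in Lemma \ref{lem:no g}.
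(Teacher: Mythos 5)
Your proof is correct, but it takes a genuinely different (and noticeably heavier) route than the paper's. You re-open the proof of Lemma~\ref{lem:no g}, carrying the factor $f$ through every step of the Menshov-type construction and re-tuning the internal spacing parameter at the end. The paper instead keeps Lemma~\ref{lem:no g} as a black box: it takes its output $h$ and its index $m$, picks a \emph{new} large integer $r$ (even, with $r(m+\tfrac12)>N$ and $\sum_{|k|\ge r/2}|\widehat f(k)|$ small), and sets $g(x)=f(x)\,h(rx)$ with $n=r(m+\tfrac12)$. The dilation spreads the spectrum of $h$ into blocks around multiples of $r$; truncating $f$ to $F:=S_{r/2}(f)$ and $G:=F\cdot h(r\cdot)$, one gets the exact algebraic identity $S_{n}(G;x)=F(x)\,S_{m}(h;rx)$, so $|S_n(G;x)|\le\|\widehat f\|_1\cdot|S_m(h;rx)|$, and if $x\in\supp g$ then $rx\in\supp h\pmod 1$, where $S_m(h;\cdot)$ is small. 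Your closing paragraph dismisses the ``quote Lemma~\ref{lem:no g} as a black box'' route on the grounds that Rajchman's equiconvergence threshold cannot be tuned; but the paper's black-box route uses no Rajchman at all, only a dilation by a fresh parameter $r$ and the trivial tail estimate on $\widehat f$. So the circularity you feared simply does not arise, and the smoothness required of $f$ is the same ($\widehat f\in\ell^1$). Your approach buys nothing the dilation trick does not, at the cost of redoing the bookkeeping of Lemma~\ref{lem:no g} with an extra factor of $f$; it does, however, have the minor conceptual merit of making it explicit that the argument is stable under replacing the window $v(\cdot-j/a)$ by $f\cdot v(\cdot-j/a)$.
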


\begin{proof}
Let $h$ be the function from lemma \ref{lem:no g} with $\epsilon_{\text{lemma \ref{lem:no g}}}=\epsilon/2||\widehat{f}||_{1}$.
Denote by $m$ the integer output of lemma \ref{lem:no g} i.e.\ the
number such that $S_{m}(h;x)<\epsilon/(2||\widehat{f}||_{1})$ for
all $x\in\supp h$. Let $r$ be large enough so that
\[
\sum_{|k|\ge r/2}|\widehat{f}(k)|<\epsilon/(2||\widehat{h}||_{1})
\]
and such that $r(m+1/2)>N$ (let $r$ be even). Denote
\begin{align*}
g(x) & \colonqq f(x)h(rx)\qquad n\colonqq r(m+1/2)
\end{align*}
where $h$ is extended periodically to $\mathbb{R}$. Let us see that
$g$ and $n$ satisfy the requirements of the lemma. The smoothness
of $g$ follows from those of $f$ and $h$. Condition (\ref{enu:g-f^})
follows because 
\[
\widehat{g}(k)-\widehat{f}(k)=\sum_{l}\widehat{h-1}(l)\widehat{f}(k-lr)
\]
and because $||\widehat{h-1}||_{\infty}\le\epsilon/(2||\widehat{f}||_{1})$.
Finally, to see condition (\ref{enu:Sng small}) write
\[
F\colonqq S_{r/2}(f)\qquad G(x)\colonqq F(x)h(rx)
\]
and note that $||\widehat{g-G}||_{1}\le||\widehat{f-F}||_{1}||\widehat{h}||_{1}<\frac{1}{2}\epsilon$.
To estimate $S_{n}(G)$, note that if $x\in\supp g$ then $rx\in\supp h$
mod 1 and hence $S_{m}(h;rx)<\epsilon/(2||\widehat{f}||_{1})$. But
\[
S_{n}(G;x)=F(x)S_{m}(h;rx)
\]
but since $|F(x)|\le||\widehat{F}||_{1}\le||\widehat{f}||_{1}$ we
get
\[
|S_{n}(G;x)|\le||\widehat{f}||_{1}\frac{\epsilon}{2||\widehat{f}||_{1}}=\frac{\epsilon}{2}
\]
finishing the lemma.
\end{proof}

\subsection{Proof of theorem \ref{thm:example}}

The coefficients $c_{l}$ will be constructed by inductively applying
lemma \ref{lem:yes g}. Define therefore $f_{1}=1$ and $n_{1}=2$,
and for all $k\ge1$ define $f_{k+1}=g_{\text{lemma \ref{lem:yes g}}}$
and $n_{k+1}=n_{\text{lemma \ref{lem:yes g}}}$ where lemma \ref{lem:yes g}
is applied with $f_{\text{lemma \ref{lem:yes g}}}=f_{k}$, $\epsilon_{\text{lemma \ref{lem:yes g}}}=2^{-k}/n_{k}$
and $N_{\text{lemma \ref{lem:yes g}}}=n_{k}+1$ (this last parameter
merely ensures that the $n_{k}$ are increasing). We now claim that
$\widehat{f_{k}}(l)$ converges as $k\to\infty$, and that the limit,
$c_{l}$, satisfies the requirements of the theorem.

The fact that $\lim_{k\to\infty}\widehat{f_{k}}(l)$ exists is clear,
because $\widehat{f_{k+1}}(l)-\widehat{f_{k}}(l)<2^{-k}/n_{k}$. Denote
\[
c_{l}=\lim_{k\to\infty}\widehat{f_{k}}(l).
\]
This also shows that $c_{l}\to0$. 

Denote now $S_{n}=\sum_{l=-n}^{n}c_{l}e(lx).$ To see that $S_{n_{k}}(x)\to0$
for all $x$ we separate into $x\in\cap\supp f_{k}$ and the rest.
Note that $\cap\supp f_{k}$ contains the support of the distribution
$\delta:=\sum c_{l}e(lx)$. Indeed, if $\varphi$ is a Schwartz test
function supported outside $\cap\supp f_{k}$ then $\supp\varphi\cap\supp f_{k}$
is a sequence of compact sets decreasing to the empty set (recall
that $\supp f_{k+1}\subseteq\supp f_{k}$) so for some finite $k_{0}$
we already have $\supp\varphi\cap\supp f_{k}=\emptyset$ for all $k>k_{0}$.
This of course implies that $\langle\varphi,f_{k}\rangle=0$. Taking
the limit $k\to\infty$ we get $\langle\varphi,\delta\rangle=0$ (we
may take the limit since $||\widehat{f_{k}-\delta}||_{\infty}\to0$
while $\widehat{\varphi}\in l_{1}$). Since this holds for any $\varphi$
supported outside $\cap\supp f_{k}$ we get $\supp\delta\subset\cap\supp f_{k}$,
as claimed.

Now, for $x\not\in\cap\supp f_{k}$ we use the localisation principle
in the form (\ref{eq:KS}) and get  
\begin{equation}
\lim_{n\to\infty}S_{n}(x)=0\qquad\forall x\not\in\bigcap\supp f_{k}\label{eq:outside support}
\end{equation}
i.e.\ outside the support it is not necessary to take a subsequence.

Finally, examine $x\in\supp f_{k}$. By clause (\ref{enu:Sng small})
of lemma \ref{lem:yes g}
\begin{equation}
|S_{n_{k}}(f_{k};x)|<\frac{1}{2^{k-1}n_{k-1}}.\label{eq:nk at k}
\end{equation}
For any $j\ge k$, the condition $|\widehat{f_{j+1}}(k)-\widehat{f_{j}}(k)|<2^{-j}/n_{j}\le2^{-j}/n_{k}$
means that
\[
|S_{n_{k}}(f_{j+1};x)-S_{n_{k}}(f_{j};x)|<3\cdot2^{-j}
\]
which we sum (also with (\ref{eq:nk at k})) to get
\[
|S_{n_{k}}(f_{j};x)|<8\cdot2^{-k}\qquad\forall j\ge k
\]
and taking limit as $j\to\infty$ gives
\[
\big|S_{n_{k}}(x)\big|<8\cdot2^{-k}\qquad\forall x\in\supp f_{k}.
\]
We conclude
\[
\lim_{k\to\infty}S_{n_{k}}(x)=0\qquad\forall x\in\bigcap\supp f_{k}.
\]
With (\ref{eq:outside support}), the theorem is proved.\qed
\begin{rem*}
The observant reader probably noticed that we use smooth functions
as our building blocks rather than trigonometric polynomials, and
hence our construction does not naturally have large spectral gaps,
unlike many constructions of null series. This is not a coincidence:
it is not possible to have many large spectral gaps in any series
that satisfies the requirements of Theorem \ref{thm:example}. Precisely,
a theorem of Beurling states that any tempered distribution $\sum c_{l}e^{ilt}$
whose supported is not the whole interval (and our $c_{l}$ satisfy
that, see Lemma \ref{lem:K} below) cannot have $c_{l}=0$ on an increasing
sequence of intervals $[a_{k},b_{k}]$ satisfying $\sum(b_{k}-a_{k})^{2}/a_{k}^{2}=\infty$.
See e.g.~\cite[Theorem 4]{B84}.
\end{rem*}

\section{Proof of theorems \ref{thm:doubly exponentially} and \ref{thm:dim}}

The following lemma summarises some properties of the support of the
distribution.
\begin{lem}
\label{lem:K}Let $c_{l}\to0$ and $n_{k}\to\infty$ such that
\[
\lim_{k\to\infty}S_{n_{k}}(x)=0\qquad\forall x\qquad S_{n}(x)=\sum_{l=-n}^{n}c_{l}e(lx)
\]
Let $K$ be the support of the distribution $\sum c_{l}e(lx)$. Then
\begin{enumerate}
\item \label{enu:critical}$K=\{x:\forall\epsilon>0,S_{n_{k}}\text{ is unbounded in }(x-\epsilon,x+\epsilon)\}$.
\item \label{enu:nowhere dense}$K$ is nowhere dense.
\end{enumerate}
\end{lem}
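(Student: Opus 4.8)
The plan is to extract a single localization lemma and derive both parts of Lemma~\ref{lem:K} from it. Call it the \emph{key claim}: if $I$ is an open interval on which the partial sums are uniformly bounded, i.e.\ $\sup_k\sup_{y\in I}|S_{n_k}(y)|<\infty$, then $I\cap K=\emptyset$. Granting this, part~(\ref{enu:critical}) is almost a restatement. Writing $R=\{x:\forall\epsilon>0,\ S_{n_k}\text{ is unbounded in }(x-\epsilon,x+\epsilon)\}$, the inclusion $K\subseteq R$ is the contrapositive of the key claim (if $x\notin R$, apply the claim to $I=(x-\epsilon,x+\epsilon)$ for a suitable $\epsilon$). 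The reverse inclusion $R\subseteq K$ uses the off-support form (\ref{eq:KS}) of Rajchman's principle: if $x\notin K$, then since $K$ is closed there is a closed interval around $x$ disjoint from $K$, on which $S_n\to0$ uniformly, hence $\sup_n\sup|S_n|<\infty$ there, so in particular $S_{n_k}$ is uniformly bounded near $x$ and $x\notin R$.

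Part~(\ref{enu:nowhere dense}) then follows from the key claim together with a Baire category argument. Since $K$ is closed, it suffices to show it has empty interior, so suppose $J\subseteq K$ were a nonempty open interval and pick a nondegenerate closed subinterval $[a,b]\subseteq J$. Because $S_{n_k}(x)\to0$ at every point, $[a,b]=\bigcup_{M\ge1}\{x\in[a,b]:\sup_k|S_{n_k}(x)|\le M\}$, a countable union of closed sets (each $S_{n_k}$ is a continuous trigonometric polynomial); by Baire, one of them has nonempty interior relative to $[a,b]$, hence contains an open interval $J'\subseteq[a,b]\subseteq J$ on which $S_{n_k}$ is uniformly bounded. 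The key claim gives $J'\cap K=\emptyset$, contradicting $\emptyset\ne J'\subseteq J\subseteq K$.

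It remains to prove the key claim. Fix a smooth $\varphi$ with $\supp\varphi\subseteq I$; I will show every Fourier coefficient of the distribution $\varphi\delta$ vanishes, where $\delta=\sum c_le(lx)$, which (letting $\varphi$ range over all test functions supported in $I$) forces $\delta|_I=0$, i.e.\ $I\cap K=\emptyset$. By Rajchman's localization principle (\ref{eq:Rajchman}), the partial sums $T_n(x)=\sum_{|l|\le n}(c*\widehat\varphi)(l)e(lx)$ — whose coefficients are exactly $\widehat{\varphi\delta}(l)$ — satisfy $T_{n_k}=\varphi S_{n_k}+o(1)$ uniformly in $x$. On $I$ the factor $S_{n_k}$ is bounded by hypothesis, and $\varphi$ vanishes off $I$, so $\varphi S_{n_k}$ is bounded uniformly in $k$ over the whole circle; moreover $\varphi S_{n_k}\to0$ pointwise everywhere (on $I$ since $S_{n_k}\to0$ pointwise there, off $I$ trivially). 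Hence $T_{n_k}$ is uniformly bounded and tends to $0$ pointwise, so by the bounded convergence theorem $\widehat{T_{n_k}}(j)=\int_0^1 T_{n_k}(x)e(-jx)\,dx\to0$ for each fixed $j$; but $\widehat{T_{n_k}}(j)=\widehat{\varphi\delta}(j)$ once $n_k\ge|j|$, so $\widehat{\varphi\delta}(j)=0$ for all $j$, i.e.\ $\varphi\delta=0$. I do not expect a genuine obstacle here; the one point requiring care is transferring the boundedness of $S_{n_k}$ from $I$ to $T_{n_k}$ on the \emph{whole} circle — this works precisely because $\varphi$ is supported inside $I$, so the uncontrolled values of $S_{n_k}$ outside $I$ are killed — and, in part~(\ref{enu:nowhere dense}), remembering to run Baire on a closed subinterval of $J$ rather than on $J$ itself.
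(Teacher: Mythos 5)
Your proof is correct and follows the same overall architecture as the paper's: a single ``boundedness of $S_{n_k}$ on an open interval implies the interval misses $K$'' lemma, plus the off-support form of Rajchman for the reverse inclusion of~(\ref{enu:critical}), plus Baire for~(\ref{enu:nowhere dense}). The one real divergence is in how the key claim is proved. You route it through Rajchman's equiconvergence~(\ref{eq:Rajchman}): form $T_n=\sum_{|l|\le n}(c*\widehat\varphi)(l)e(lx)$, argue that $T_{n_k}=\varphi S_{n_k}+o(1)$ is uniformly bounded and tends to $0$ pointwise, and then extract $\widehat{\varphi\delta}(j)=0$ for every $j$ by bounded convergence. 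The paper instead kills $\delta$ on $I$ in one line without invoking Rajchman at all: for $\varphi$ a test function supported on $I$,
\[
\langle\varphi,\textstyle\sum c_l e(lx)\rangle=\sum_{l}c_l\widehat\varphi(l)=\lim_{k}\sum_{|l|\le n_k}c_l\widehat\varphi(l)=\lim_k\int\varphi\,S_{n_k}=0
\]
by Parseval and bounded convergence, the interchange of sum and limit being justified by the rapid decay of $\widehat\varphi$ and boundedness of $c_l$. This is strictly lighter machinery — your version establishes the stronger statement $\widehat{\varphi\delta}(j)=0$ for all $j$, but since $\varphi$ ranges over all test functions supported on $I$, only the $j=0$ case is needed, and that is exactly the paper's computation. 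Your Baire step for~(\ref{enu:nowhere dense}) is set up as a contradiction on a hypothetical open $J\subseteq K$; the paper runs Baire directly to exhibit, inside every interval, a subinterval disjoint from $K$ — same argument, just framed positively. Both are fine; the paper's proof of the key claim is the cleaner one to record.
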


\begin{proof}
We start with clause (\ref{enu:critical}). On the one hand, if $x\not\in K$
then the localisation principle (\ref{eq:KS}) tells us that $S_{n}\to0$
uniformly in some neighbourhood of $x$. On the other hand, if $S_{n_{k}}$
is bounded in some neighbourhood $I$ of $x$ then for any smooth
test function $\varphi$ supported on $I$ we have 
\[
\langle\varphi,\sum c_{l}e(lx)\rangle=\sum_{l=-\infty}^{\infty}c_{l}\widehat{\varphi}(l)=\lim_{k\to\infty}\sum_{l=-n_{k}}^{n_{k}}c_{l}\widehat{\varphi}(l)=\lim_{k\to\infty}\int\varphi S_{n_{k}}
\]
but the integral on the right-hand side tends to zero from the bounded
convergence theorem. This shows (\ref{enu:critical}).

To see clause (\ref{enu:nowhere dense}) examine the function $N(x)=\sup_{k}|S_{n_{k}}(x)|$
and apply the Baire category theorem to the sets $\{x:N(x)\ge M\}$
for all integer $M$. We get, in every interval $I$, an open interval
$J\subset I$ and an $M$ such that $N(x)\le M$ on a dense subset
of $J$. continuity shows that in fact $N(x)\le M$ on all of $J$
and hence $J\cap K=\emptyset$, as needed.
\end{proof}
\begin{rem*}
Without the condition $c_{l}\to0$ it still holds that
\[
K\subset\{x:\forall\epsilon>0,S_{n_{k}}\text{ is unbounded in }(x-\epsilon,x+\epsilon)\}
\]
and that $K$ is nowhere dense. The proof is the same.
\end{rem*}
We will now make a few assumptions that will make the proof less cumbersome.
First we assume that $c_{-l}=\overline{c_{l}}$ (or, equivalently,
that the $S_{n}$ are real). It is straightforward to check that this
assumption may be made without loss of generality in both theorems
\ref{thm:doubly exponentially} and \ref{thm:dim}. Our next assumption
is:

\begin{assumption}In the next lemma we assume that $S_{n_{k}}$ is
bounded on $K,$ the support of the distribution $\sum c_{l}e(lx)$.
Further, whenever we write ``$C$'', the constant is allowed to
depend on $\sup\{|S_{n_{k}}(x)|:x\in K,k\}$. \end{assumption} 

As in the proof of proposition \ref{prop:measure}, we will eventually
remove this assumption by a simple localisation argument.
\begin{lem}
\label{lem:new riemann}Let $c_{l}$, $n_{k}$ and $S_{n}$ be as
in the previous lemma. Let $r$ be a sufficiently large number in
our sequence (i.e.\ $r=n_{k}$ for some $k$) and let $s>r^{3/2}\log^{4}r$
not necessarily in the sequence. Then
\begin{equation}
||S_{s}||\ge c||S_{r}||^{2}.\label{eq:no dim}
\end{equation}
\end{lem}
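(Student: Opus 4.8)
The plan is to imitate the Parseval trick from the proof of Proposition \ref{prop:measure}, but with the partial sum $S_s$ playing the role of the measure $\mu$ and $S_r$ playing the role of $S_{n_k}$. The starting point is the identity
\[
\sum_{|l|\le r}|c_l|^2 = \sum_l \overline{\widehat{S_r}(l)}\,\widehat{S_s}(l) = \int \overline{S_r(x)}\,S_s(x)\,dx,
\]
valid because $\widehat{S_r}$ is supported on $\{|l|\le r\}$ and there $\widehat{S_s}=\widehat{S_r}=c_l$ (using $s>r$). Since $r=n_k$ for some $k$, the Assumption gives $|S_r(x)|\le C$ for all $x\in K$. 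If $S_r$ were essentially supported on $K$ we would immediately get $\|S_r\|^2\le C\|S_s\|_{L^1(K)}\le C\|S_s\|$ and be done. The whole content of the lemma is that this is almost true: $S_r$ is small off a neighbourhood of $K$ of size governed by $1/r$, and the error terms introduced by working with a neighbourhood rather than with $K$ itself are controlled by the hypothesis $s>r^{3/2}\log^4 r$.

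So the key steps, in order, are: (1) Fix a neighbourhood $U$ of $K$ of width roughly $\delta := $ (something like $r^{-1/2}\log^2 r$), a scale chosen so that $\sqrt{r}\,\delta$ and $\delta^{-1}/s$ are both small — this is where the exponent $3/2$ and the logarithmic powers enter. (2) Bound $\int_{U}|S_r|\,|S_s|$ by Cauchy--Schwarz against $\|S_s\|$, after first replacing $S_r$ on $U$ by its value on the nearby point of $K$: by the localisation principle (\ref{eq:KS}), off $K$ the series converges to zero, and quantitatively $S_r$ (being a partial sum of a null series, with $c_l\to 0$) differs from its boundary value by $O(r\delta)$ times something, or more carefully one estimates $\|S_r\|_{L^\infty(U)}$ directly from $|S_r|\le C$ on $K$ plus a Bernstein/derivative bound $\|S_r'\|_\infty \le Cr\|S_r\|_\infty$ on the relevant frequency band — giving $\|S_r\|_{L^\infty(U)}\le C$ once $\delta\lesssim 1/r$, or with a little more work for the larger $\delta$. (3) Bound $\int_{[0,1]\setminus U}\overline{S_r}\,S_s$: here $S_s$ is not small, but one writes $S_s = S_r + (S_s - S_r)$ and uses that $S_s-S_r$ has spectrum in $r<|l|\le s$ while... actually the cleaner route is to convolve: replace the sharp cutoff $S_r=D_r * \delta$-distribution by a smooth Fejér-type kernel $K_\delta$ at scale $\delta$, so that $K_\delta * (\text{distribution})$ is genuinely supported within $\delta$ of $K$ up to an error that is $O((r\delta)^{-A})$ in $L^2$ for any $A$, absorbing the difference $\|S_r - K_\delta*(\sum c_l e(lx))\|$ into the main term using $\|S_r\|\ge\|S_r\|_\infty^{-1}\sum|c_l|^2\ge c\|S_r\|^2$ trivially... — the bookkeeping here is the substance.

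The main obstacle is step (3), the off-$K$ contribution, because $S_s$ is genuinely large off $K$ (it only becomes small on the subsequence, and $s$ need not be in the subsequence). The resolution must exploit that $S_s$ has \emph{no low frequencies removed beyond $s$} together with a smoothing argument: one works not with the sharp Dirichlet projection but with a smooth Fourier multiplier $\chi$ that is $1$ on $[-r,r]$ and supported on $[-2r,2r]$, notes that $\chi * (\sum c_l e(lx))$ agrees with $\sum c_l e(lx)$ up to frequency $r$ (hence has the same $L^2$ norm up to a constant factor as $S_r$, by the lower bound $\|S_r\|\ge$ its projection to $|l|\le r$), and the smooth multiplier has a kernel of width $1/r$ so the smoothed function really is supported within $C/r$ of $K$ modulo rapidly decaying tails. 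Then the split $\int = \int_U + \int_{U^c}$ has $\int_{U^c}$ negligible (rapid decay of the kernel tail times $\|S_s\|_\infty\le Cs$, which is polynomial, so choosing the decay exponent large enough beats it — this is exactly why $s$ is allowed to be as large as a fixed power of $r$ times logs), and $\int_U$ is the main term $\le C\|S_r\|_\infty \cdot |U|^{1/2}\|S_s\| \le C\|S_s\|$ since $|U|\le 1$. Finally one removes the Assumption by the same $\varphi\mu \rightsquigarrow \varphi\cdot(\sum c_l e(lx))$ localisation as in Proposition \ref{prop:measure}, via Rajchman's theorem (\ref{eq:Rajchman}); since the inequality (\ref{eq:no dim}) is local in flavour this causes no difficulty beyond notation.
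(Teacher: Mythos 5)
Your overall architecture matches the paper's: the starting identity $\|S_r\|^2=\int \overline{S_r}\,S_s$, the split of the integral into a part near $K$ and a part far from $K$, Cauchy--Schwarz against $\|S_s\|$ on the near part, and the removal of the boundedness Assumption by Rajchman localisation at the end. But the load-bearing step --- showing that $S_r$ is small on a thin neighbourhood of $K$ --- is not correctly argued, and this is not a matter of bookkeeping. The Bernstein-type estimate $\|S_r'\|_\infty\le Cr\|S_r\|_\infty$ is circular here: $\|S_r\|_\infty$ is exactly the quantity you do not control (you control $S_r$ on $K$ only), and indeed if $\|S_r\|_\infty$ were $O(1)$ the whole lemma would be a one-liner. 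The only derivative bound actually available is the crude $|S_r'|\le Cr^2$ (from $c_l$ bounded), and integrating that from $K$ gives $|S_r(x)|\le C+Cr^2\dist(x,K)$, which is bounded only when $\dist(x,K)\lesssim r^{-2}$ --- far smaller than the scales you propose (your $\delta\approx r^{-1/2}\log^2 r$ is not even internally consistent with your own requirement that $\sqrt{r}\,\delta$ be small, and the correct scale turns out to be $\approx 1/s\approx r^{-3/2}$). A pointwise $L^\infty$ bound on $S_r$ near $K$ is in fact false in the regime needed; the paper instead proves the \emph{$L^2$} bound $\|S_r\mathbbm 1_E\|\le C$ where $E$ is the $\approx(\log^3 s)/s$-neighbourhood of $K$, via a level-set decomposition of $|S_r'|$ combined with Chebyshev against $\|S_r'\|^2\le r^2\|S_r\|^2$ and the geometric fact that each component of $E$ has length $\le 2(\log^3 s)/s$. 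That mechanism --- averaging over level sets rather than taking a sup --- is precisely where the exponent $3/2$ comes from, and it is absent from your proposal.

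A secondary issue is the far-from-$K$ term. The paper handles it cleanly by noting that $S_r\varphi$ is a smooth function supported off $K$, so $\langle\sum c_l e(lx),\,S_r\varphi\rangle=0$; the integral $\int S_s S_r\varphi$ then equals a high-frequency tail $-\sum_{|l|>s}c_l\widehat{S_r\varphi}(l)$, which is superpolynomially small thanks to the smoothness of $\varphi$. Your alternative via a smooth Fourier multiplier $\chi$ at frequency scale $2r$ runs into a scale mismatch: the kernel of $\chi$ has width $\approx 1/r$, which is much wider than the $\approx 1/s$ neighbourhood of $K$ on which the main term must live, so ``supported within $C/r$ of $K$ modulo rapidly decaying tails'' does not place the mass where you need it; and you would still lack $L^\infty$ control of the smoothed object on that wider neighbourhood. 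Your proposal correctly identifies the two obstacles ($S_s$ is large off $K$, and $S_r$ is not obviously small near $K$), but does not supply a working mechanism for either; the paper's support-orthogonality trick and the Chebyshev level-set estimate are the two ideas that are missing.
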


Lemma \ref{lem:new riemann} is used in the proof of theorem \ref{thm:doubly exponentially}.
We will also need a version of lemma \ref{lem:new riemann} for theorem
\ref{thm:dim} but that version is somewhat clumsy to state, so rather
than doing it now, we postpone it to the end of the proof of the lemma,
the impatient can jump to (\ref{eq:dim-2}) to see it. The only point
worthy of making now is that we will need a result that holds for
all $s>r$ so throughout the proof of lemma \ref{lem:new riemann}
we will note when we use the assumption $s>r^{3/2}\log^{4}r$ and
when $s>r$ is enough.

It might be tempting to think that lemma \ref{lem:new riemann} is
a lemma on trigonometric polynomials, i.e.\ that it would have been
possible to simply formulate it for $S_{r}$ being the Fourier partial
sum of $S_{s}$. However, as the proof will show, we need to have
the full distribution acting ``in the background'' restricting both
what $S_{r}$ and $S_{s}$ may do.
\begin{proof}
 Fix $r$ and $s>r$. It will be convenient to assume $s/\log^{4}s\ge r$,
so let us make this assumption until further notice. Denote $K=\supp\sum c_{l}e(lx)$
and let $I$ be a component of $K^{c}$ with $|I|>(2\log^{3}s)/s$.
Let $\varphi_{I}$ be a function with the following properties:
\begin{enumerate}
\item If $I=(a,b)$ then $\varphi_{I}$ restricted to $[a+(\log^{3}s)/s,b-(\log^{3}s)/s]$
is identically $1$.
\item $\supp\varphi_{I}\subset I$ (note that $I$ is open, so this inclusion
must be strict).
\item $\varphi_{I}(x)\in[0,1]$ for all $x\in[0,1]$.
\item $|\widehat{\varphi_{I}}(l)|\le C\exp\Big(-c\sqrt{(|l|\log^{3}s)/s}\Big)$.
\end{enumerate}
It is easy to see that such a $\varphi_{I}$ exists \textemdash{}
take a standard construction of a $C^{\infty}$ function $\psi:\mathbb{R}\to[0,1]$
with $\psi|_{(-\infty,0)}\equiv0$, $\psi|_{[1,\infty)}\equiv1$ and
$||\psi^{(k)}||_{\infty}\le C(k!)^{2}$ (see e.g.\ \cite[\S V.2]{K04}),
define $\varphi$ by mapping $\psi$ (restricted to an appropriate
interval) linearly to each half of $I$ and estimate $\widehat{\varphi}(l)$
by writing $|\widehat{\varphi}(l)|\le l^{-k}\cdot||\varphi^{(k)}||_{\infty}$
and optimising over $k$. We skip any further details.

Let
\[
\varphi=\sum_{I}\varphi_{I}
\]
where the sum is taken over all $I$ as above, i.e.\ $I$ is a component
of $K^{c}$ with $|I|>(2\log^{3}s)/s$. Our lemma is based on the
following decomposition
\[
||S_{r}||^{2}=\int S_{s}\cdot S_{r}=\int S_{s}\cdot S_{r}\cdot\varphi+\int S_{s}\cdot S_{r}\cdot(1-\varphi).
\]
To estimate the first summand, first note that
\begin{align*}
|\widehat{S_{r}\cdot\varphi_{I}}(n)| & \le\sum_{l=-r}^{r}|c_{l}\widehat{\varphi_{I}}(n-l)|\le C\sum_{l=-r}^{r}\exp\left(-c\sqrt{\frac{|n-l|\log^{3}s}{s}}\right)\\
 & \stackrel{\mathclap{{(*)}}}{\le}Cr\exp\Big(-c\sqrt{(|n|\log^{3}s)/s}\Big).
\end{align*}
The inequality marked by $(*)$ is a simple exercise, but let us remark
on it anyway. If $|n|<2s/\log^{3}s$ then both sides of $(*)$ are
$\approx r$ and it holds. If $|n|\ge2s/\log^{3}s$ then, because
we assumed $s/\log^{4}s>r$, we get that $\frac{1}{2}|n|>r\ge|l|$
so $|n-l|\ge\frac{1}{2}|n|$ and $(*)$ holds again. 

Summing over $I$ gives
\[
|\widehat{S_{r}\cdot\varphi}(n)|\le Crs\exp\Big(-c\sqrt{(|n|\log^{3}s)/s}\Big).
\]
Next, because $S_{r}\cdot\varphi$ is supported outside $K$ we have
\[
\sum_{l=-\infty}^{\infty}c_{l}\widehat{S_{r}\cdot\varphi}(l)=0
\]
so
\[
\int S_{s}\cdot S_{r}\cdot\varphi=-\sum_{|l|>s}c_{l}\widehat{S_{r}\cdot\varphi}(l)
\]
and then
\begin{align}
\Big|\int S_{s}\cdot S_{r}\cdot\varphi\Big| & \le\sum_{|l|>s}|c_{l}|\cdot|\widehat{S_{r}\cdot\varphi}(l)|\le C\sum_{|l|>s}rs\exp\Big(-c\sqrt{(|l|\log^{3}s)/s}\Big)\nonumber \\
 & \le C\exp(-c\log^{3/2}s)\label{eq:SrSsphi}
\end{align}
which is negligible (the last inequality can be seen, say, by dividing
into blocks of size $s$, getting the expression $Crs^{2}\sum_{k=1}^{\infty}\exp(-ck\log^{3/2}s)$
which is clearly comparable to its first term $\exp(-c\log^{3/2}s)$,
and finally noting that the term $rs^{2}\le s^{3}$ may be dropped
at the price of changing the constants $C$ and $c$).

We move to the main term, $\int S_{r}S_{s}(1-\varphi)$, which we
will estimate using Cauchy-Schwarz
\[
\Big|\int S_{s}\cdot S_{r}\cdot(1-\varphi)\Big|\le||S_{s}||\cdot||S_{r}(1-\varphi)||.
\]
Hence we need to estimate $||S_{r}(1-\varphi)||$. For this we do
not need the smoothness of $\varphi$ so define $E:=\supp(1-\varphi)$
and replace $1-\varphi$ with $\mathbbm{1}_{E}$. Thus the lemma will
be proved once we show
\begin{claim*}
If $s>r^{3/2}\log^{4}r$ then $||S_{r}\mathbbm{1}_{E}||\le C$.
\end{claim*}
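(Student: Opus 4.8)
The plan is to estimate $||S_r\mathbbm 1_E||^2=\int_E|S_r|^2$ by splitting $E$ into the compact set $K$ and the ``collar'' $E':=E\setminus K$. On $K$ the Assumption gives $|S_r|=|S_{n_k}|\le C$ (recall $r=n_k$), so $\int_K|S_r|^2\le C$ at once. The collar has a transparent structure: since $\varphi=\sum_I\varphi_I$ with each $\varphi_I$ supported in a component $I=(a_I,b_I)$ of $K^c$ and identically $1$ on $[a_I+w,b_I-w]$, where $w:=(\log^3 s)/s$, the function $1-\varphi$ equals $1$ on $K$ and on every component of $K^c$ of length $\le 2w$, and vanishes on the middle part $[a_I+w,b_I-w]$ of every longer component. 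Hence $E'$ is a disjoint union of open intervals $J_i$, each of length $\ell_i\le 2w$, each abutting $K$ at (at least) one endpoint $\alpha_i$ — namely the short components of $K^c$, together with the two width-$w$ edge strips of each longer component. In particular $\sum_i\ell_i\le1$ and $\sum_i\int_{J_i}|S_r'|^2\le||S_r'||^2$.

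On each $J_i$ I would write $S_r(x)=S_r(\alpha_i)+\int_{\alpha_i}^x S_r'$ and apply Cauchy--Schwarz to get $|S_r(x)|^2\le 2C^2+2\ell_i\int_{J_i}|S_r'|^2$, hence $\int_{J_i}|S_r|^2\le 2C^2\ell_i+8w^2\int_{J_i}|S_r'|^2$; summing over $i$ gives
\[
\int_{E'}|S_r|^2\ \le\ 2C^2+8w^2\,||S_r'||^2 .
\]
Thus the claim reduces entirely to $w^2||S_r'||^2=O(1)$, and it is here that the hypothesis $s>r^{3/2}\log^4 r$ is spent, essentially without slack. On one hand, since the $c_l$ are bounded,
\[
||S_r'||^2=4\pi^2\sum_{|l|\le r}l^2|c_l|^2\ \le\ 4\pi^2 r^2\sum_{|l|\le r}|c_l|^2\ \le\ C r^3 .
\]
On the other hand $w^2=(\log^6 s)/s^2$, and since $t\mapsto(\log^6 t)/t^2$ is decreasing for $t\ge e^3$, the hypothesis yields $w^2\le \log^6(r^{3/2}\log^4 r)/(r^3\log^8 r)=O(1/(r^3\log^2 r))$. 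Multiplying, $w^2||S_r'||^2=O(1/\log^2 r)$, so $||S_r\mathbbm 1_E||^2\le\int_K|S_r|^2+\int_{E'}|S_r|^2\le C$, which is the claim.

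The step I expect to be the main obstacle is precisely this final balancing. A pointwise bound for $S_r$ on the collar is useless: a priori $||S_r||_\infty$ may be of order $r$ and $||S_r'||_\infty$ of order $r^2$, so on a strip of width $w\sim r^{-3/2}$ one would only obtain $|S_r|\lesssim r^{1/2}$, which is unbounded. (One should not even replace $||S_r||_\infty$ by a bound on $||S_r||$, since the case of interest is $||S_r||\to\infty$ — were $\sum|c_l|^2$ finite, $c_l\equiv0$ would follow directly, as in Proposition~\ref{prop:measure}.) So one is forced to work in $L^2$, and the computation succeeds only because of the tight numerical fact $||S_r'||_{L^2}=O(r^{3/2})$, which just beats the collar width $w=O(r^{-3/2})$ furnished by the hypothesis $s\gtrsim r^{3/2}$ — with merely logarithmic room to spare. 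This is exactly why the exponent $3/2$ appears in the lemma. The remaining ingredients — identifying the shape of $E'$ and the fundamental-theorem-of-calculus bookkeeping above — are routine.
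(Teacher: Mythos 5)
Your proof is correct, and it takes a genuinely different --- and in fact slightly cleaner --- route than the paper's. Both arguments rest on the same raw ingredients: boundedness of $S_r$ on $K$ (so the fundamental theorem of calculus controls $S_r$ on the collar from a nearby point of $K$) together with the $L^2$ bound $||S_r'||\le Cr||S_r||\le Cr^{3/2}$. The paper, however, converts the FTC integral $\int_{\alpha}^x|S_r'|$ into a dyadic sum over the level sets $\{|S_r'|\in[M,2M]\}$, squares, and applies Chebyshev's inequality scale by scale across the $\approx\log r$ dyadic levels; you instead apply Cauchy--Schwarz once on each collar piece, obtaining $\int_{E'}|S_r|^2\le C|E|+Cw^2||S_r'||^2$ directly. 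Your route is shorter and actually saves a logarithm: carried through in the paper's notation it gives $||S_r\mathbbm{1}_E||\le C\big(\sqrt{|E|}+||S_r||\,r(\log^3 s)/s\big)$, replacing the $\log^4 s$ appearing in the paper's intermediate bounds (\ref{eq:norm Sr final}), (\ref{eq:dim}), (\ref{eq:dim-2}) --- immaterial for the claim itself, but a small genuine improvement in the constants downstream. One imprecision to patch: $E'=\supp(1-\varphi)\setminus K$ need not literally decompose into intervals each abutting $K$, since the paper does not assume $\varphi_I$ is monotone on the two edge strips of $I$, so $\{\varphi_I<1\}$ there could be disconnected. Your argument works verbatim, though, if you instead cover $E'\cap I$ by the fixed edge strips $(a,a+w]$ and $[b-w,b)$ for a long component $I=(a,b)$ (and by $I$ itself when $|I|\le 2w$): these are pairwise disjoint intervals of length $\le 2w$ each with an endpoint in $K$, and running the FTC/Cauchy--Schwarz bookkeeping over this cover, rather than over the components of $E'$, gives exactly your estimate.
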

To show the claim, we need the following definition. Let $I$ be a
component of $K^{c}$ (not necessarily large, any component) and denote,
for each such $I$ and for each $M$,
\[
A_{I,M}\colonqq|\{x\in I\cap E:|S_{r}'|\in[M,2M]\}|.
\]
We need a simple bound for the values of $M$ that interest us, and
we use that $|S_{r}'|\le Cr^{2}$ always (simply because the $c_{l}$
are bounded). For any $x\in I\cap E$ we may then estimate $S_{r}$
itself by integrating $S_{r}'$ from the closest point of $K$ up
to $x$. We get
\begin{equation}
|S_{r}(x)|\le C+\sum_{\substack{M=1\\
\text{scale}
}
}^{Cr^{2}}2MA_{I,M}\label{eq:SraIM-1}
\end{equation}
where the word ``scale'' below the $\Sigma$ means that $M$ runs
through powers of $2$ (i.e.\ it is equivalent to $\sum_{m=0}^{\lfloor\log_{2}Cr^{2}\rfloor}$
and $M=2^{m}$). Note that (\ref{eq:SraIM-1}) uses our assumption
that $\max_{x\in K}|S_{r}(x)|\le C$ for a constant $C$ independent
of $r$ (and the additive constant $C$ in (\ref{eq:SraIM-1}) is
the same $C$). Rewriting (\ref{eq:SraIM-1}) as
\[
|S_{r}\cdot\mathbbm{1}_{E}|\le\sum_{I}\mathbbm{1}_{I\cap E}\Big(C+\sum_{M\textrm{ scale}}^{Cr^{2}}2MA_{I,M}\Big)
\]
gives
\begin{align}
||S_{r}\mathbbm{1}_{E}|| & \le C\Big\Vert\sum_{I}\mathbbm{1}_{I\cap E}\Big\Vert+\sum_{M\textrm{ scale}}^{Cr^{2}}\Big\Vert\sum_{I}2MA_{I,M}\mathbbm{1}_{I\cap E}\Big\Vert\nonumber \\
 & =C\sqrt{|E|}+\sum_{M\textrm{ scale}}^{Cr^{2}}2M\sqrt{\sum_{I}|I\cap E|A_{I,M}^{2}}.\label{eq:norm Sr}
\end{align}
To estimate the sum notice that $A_{I,M}\le|I\cap E|\le2(\log s)^{3}/s$
so
\begin{align*}
\sum_{I}|I\cap E|A_{I,M}^{2} & \le4\frac{\log^{6}s}{s^{2}}\sum_{I}A_{I,M}\le4\frac{\log^{6}s}{s^{2}}|\{x:|S_{r}'(x)|\ge M\}|\\
 & \stackrel{\mathclap{{(*)}}}{\le}4\frac{\log^{6}s}{s^{2}}\frac{||S_{r}'||^{2}}{M^{2}}\le4\frac{\log^{6}s}{s^{2}}\frac{||S_{r}||^{2}r^{2}}{M^{2}}
\end{align*}
where the inequality marked by $(*)$ follows by Chebyshev's inequality.
The sum over scales in (\ref{eq:norm Sr}) has only $C\log r\le C\log s$
terms, so we get
\[
||S_{r}\mathbbm{1}_{E}||\le C\Big(\sqrt{|E|}+\frac{||S_{r}||r\log^{4}s}{s}\Big).
\]
This finishes the claim, since we assumed $s>r^{3/2}\log^{4}r$ and
since $||S_{r}||\le C\sqrt{r}$ because the coefficients $c_{l}$
are bounded. \qed

Let us recall how the claim implies the lemma: using Cauchy-Schwarz
and $(1-\varphi)\le\mathbbm{1}_{E}$ gives 
\begin{equation}
\Big|\int S_{s}\cdot S_{r}\cdot(1-\varphi)\Big|\le C||S_{s}||\Big(\sqrt{|E|}+\frac{||S_{r}||r\log^{4}s}{s}\Big)\label{eq:norm Sr final}
\end{equation}
Recall that (\ref{eq:SrSsphi}) showed that the other term in $||S_{r}||^{2}$
is negligible, so we get the same kind of estimate for $||S_{r}||^{2}$:
\begin{equation}
||S_{r}||^{2}\le C||S_{s}||\bigg(\sqrt{|E|}+\frac{||S_{r}||r\log^{4}s}{s}\bigg).\label{eq:dim}
\end{equation}
With $s>r^{3/2}\log^{4}r$ and $||S_{r}||\le C\sqrt{r}$ equation
(\ref{eq:dim}) translates to $||S_{r}||^{2}\le C||S_{s}||$, as needed.

Before putting the q.e.d.\ tombstone, though, let us reformulate
(\ref{eq:dim}) in a way that will be useful in the proof of theorem
\ref{thm:dim}. We no longer assume $s>r^{3/2}\log^{4}r$ (though
we cannot yet remove the assumption $s/\log^{4}s>r$ from the beginning
of the proof, as it was used to reach (\ref{eq:dim})). Recall that
$E=\supp(1-\varphi)$, that $\varphi=\sum\varphi_{I}$ and that each
$\varphi_{I}$ is $1$ except in a $(\log^{3}s)/s$ neighbourhood
of $K$. Hence $E\subset K+[-(\log s)^{3}/s,(\log s)^{3}/s]$ (the
sum here is the Minkowski sum of two sets) and (\ref{eq:dim}) can
be written as
\begin{equation}
||S_{r}||^{2}\le C||S_{s}||\bigg(\sqrt{\bigg|K+\left[-\frac{\log^{3}s}{s},\frac{\log^{3}s}{s}\right]\bigg|}+\frac{||S_{r}||r\log^{4}s}{s}\bigg).\label{eq:dim-2}
\end{equation}
Finally, note that (\ref{eq:dim-2}) does not actually require the
assumption $s/\log^{4}s>r$ because in the other case it holds trivially.
Hence (\ref{eq:dim-2}) holds for all $s>r$. Now we can put the tombstone.
\end{proof}

\begin{proof}
[Proof of theorem \ref{thm:doubly exponentially}] Let $K$ be the
support of the distribution $\sum c_{l}e(lx)$. We first claim that
we can assume without loss of generality that $S_{n_{k}}$ is bounded
on $K$. This uses the localisation principle exactly like we did
in the proof of proposition \ref{prop:measure}, but let us do it
in details nonetheless. Since $S_{n_{k}}(x)\to0$ everywhere $\sup_{k}|S_{n_{k}}(x)|$
is finite everywhere. Applying the Baire category theorem to the function
$\sup_{k}|S_{n_{k}}(x)|$ on $K$ we see that there is an open interval
$I$ such that $S_{n_{k}}$ is bounded on a dense subset of $K\cap I$,
and $K\cap I\ne\emptyset$. Continuity of $S_{n_{k}}$ shows that
they are in fact bounded on the whole of $K\cap I$. By the definition
of support of a distribution, we can find a smooth test function $\varphi$
supported on $I$ such that $\sum\widehat{\varphi}(l)c_{l}$ is not
zero. Let $d_{l}=c_{l}*\widehat{\varphi}$ (and hence $d$ is not
zero either). Then by the localisation principle (\ref{eq:Rajchman}),
$\sum_{-n_{k}}^{n_{k}}d_{l}e(lx)$ converges everywhere to zero and
is bounded on $K\cap I$, which contains the support of $\sum d_{l}e(lx)$.
Hence we can rename $d_{l}$ to $c_{l}$ and simply assume that $S_{n_{k}}$
is bounded on $K$.

We now construct a series $r_{i}$ as follows: we take $r_{1}=n_{1}$
and for each $i\ge1$ let $r_{i+1}$ be the first element of the series
$n_{k}$ which is larger than $r_{i}^{7/4}$. Because $n_{k+1}=n_{k}^{1+o(1)}$
we will have in fact that $r_{i+1}=r_{i}^{7/4+o(1)}$ and hence
\begin{equation}
r_{i}=\exp((7/4+o(1))^{i}).\label{eq:ridoubly}
\end{equation}
We now apply lemma \ref{lem:new riemann}  with $r_{\text{lemma \ref{lem:new riemann}}}=r_{i}$
and $s_{\text{lemma \ref{lem:new riemann}}}=r_{i+1}$. We get
\[
||S_{r_{i+1}}||\ge c||S_{r_{i}}||^{2}
\]
Denote this last constant by $\lambda$ for clarity (i.e.\ $||S_{r_{i+1}}||\ge\lambda||S_{r_{i}}||^{2}$).
Iterating the inequality $||S_{r_{i+1}}||\ge\lambda||S_{r_{i}}||^{2}$
starting from some $i_{0}$ such that $||S_{r_{i_{0}}}||>e/\lambda$
gives
\[
||S_{r_{i}}||\ge(\lambda||S_{r_{i_{0}}}||)^{2^{i-i_{0}}}>\exp(2^{i-i_{0}})
\]
Together with (\ref{eq:ridoubly}) we get 
\[
||S_{r_{i}}||\ge\exp((\log r_{i})^{1.2386+o(1)})
\]
(the number is $\approx\log2/\log\nicefrac{7}{4}$) which certainly
contradicts the boundedness of the $c_{l}$.
\end{proof}

\begin{proof}
[Proof of theorem \ref{thm:dim}]Denote $d=\dim_{\Mink}(K)$ (recall
that this is the upper Minkowski dimension). Assume by contradiction
that $c_{l}\not\equiv0$ and without loss of generality assume that
$c_{0}=1$ (if $c_{0}=0$, shift the sequence $c_{l}$ and note that
the condition $c_{l}\to0$ ensures that $S_{n_{k}}(x)\to0$ even for
the shifted sequence). 

Fix $s\in\mathbb{N}$ and let $\varphi$ be as in the proof of lemma
\ref{lem:new riemann}: let us remind the most important properties:
\begin{enumerate}
\item $\supp\varphi\cap K=\emptyset$;
\item $\supp(1-\varphi)\subset K+[-(\log^{3}s)/s,(\log^{3}s)/s]$; 
\item $\varphi(x)\in[0,1]$ for all $x\in[0,1]$; and
\item $|\widehat{\varphi}(l)|\le Cs\exp\Big(-c\sqrt{(|l|\log^{3}s)/s}\Big)$.
\end{enumerate}
From this we can get a lower bound for $||S_{s}||$. From $\supp\varphi\cap K=\emptyset$
we get
\[
\sum_{l=-\infty}^{\infty}c_{l}\widehat{\varphi}(l)=0
\]
so
\[
\int S_{s}\varphi=\sum_{l=-s}^{s}c_{l}\widehat{\varphi}(l)=-\sum_{|l|>s}c_{l}\widehat{\varphi}(l)
\]
giving
\[
\Big|\int S_{s}\varphi\Big|\le\sum_{|l|>s}Cs\exp\Big(-c\sqrt{(|l|\log^{3}s)/s}\Big)\le C\exp(-c\log^{3/2}s).
\]
By assumption $\int S_{s}=c_{0}=1$ so for $s$ sufficiently large
\[
\Big|\int S_{s}(1-\varphi)\Big|=1-O(\exp(-c\log^{3/2}s))>\nicefrac{1}{2}.
\]
Using Cauchy-Schwarz gives
\begin{align*}
\nicefrac{1}{2} & <||S_{s}||\sqrt{|\supp(1-\varphi)|}\\
 & \le||S_{s}||\sqrt{\left|K+\Big[-\frac{\log^{3}s}{s},\frac{\log^{3}s}{s}\Big]\right|}\le||S_{s}||\cdot\sqrt{s^{d-1+o(1)}}
\end{align*}
where in the last inequality we covered $K$ by intervals of size
$1/s$ \textemdash{} no more than $s^{d+o(1)}$ by the definition
of upper Minkowski dimension \textemdash{} and inflated each one by
$(\log^{3}s)/s$. We conclude that
\begin{equation}
||S_{s}||\ge s^{(1-d)/2+o(1)}\label{eq:lower bound}
\end{equation}
as $s\to\infty$.

In the other direction, fix some $r$ in our sequence and use (\ref{eq:dim-2})
to get:
\[
||S_{r}||^{2}\le C||S_{s}||\bigg(s^{(d-1)/2+o(1)}+\frac{||S_{r}||r\log^{4}s}{s}\bigg).
\]
Choose $s=(r||S_{r}||)^{2/(d+1)}$ (this makes the summands approximately
equal) and get
\begin{align}
\frac{||S_{s}||}{\sqrt{s}} & \ge||S_{r}||^{2}\cdot(r||S_{r}||)^{{\displaystyle \Big(-\frac{d}{d+1}+o(1)\Big)}}\nonumber \\
 & \stackrel{\textrm{\ensuremath{\mathclap{{(*)}}}}}{\ge}r^{{\displaystyle \Big(-\frac{d}{d+1}+\frac{1-d}{2}\cdot\frac{d+2}{d+1}+o(1)\Big)}}\label{eq:power of r}
\end{align}
where the inequality marked by $(*)$ follows from $||S_{r}||\ge r^{(1-d)/2+o(1)}$,
which is (\ref{eq:lower bound}) with $s$ replaced by $r$. When
$d<\frac{1}{2}(\sqrt{17}-3)$ the power of the $r$ in (\ref{eq:power of r})
is positive. This means that $||S_{s}||/\sqrt{s}\to\infty$, contradicting
the boundedness of the coefficients $c_{l}$.
\end{proof}

\section{\label{sec:Localisation}Localisation with bounded coefficients}

Our last remark is that there is a version of the localisation principle
suitable even when the coefficients of the series do not converge
to zero, but are still bounded. Let us state it first
\begin{thm}
\label{thm:local}Let $c_{l}$ be bounded and $n_{k}$ some sequence
and let $\varphi$ be a smooth function. Then there exists a subsequence
$m_{k}$ and two functions $a$ and $b$ such that 
\[
\varphi(x)\sum_{l=-m_{k}}^{m_{k}}c_{l}e(lx)-\sum_{l=-m_{k}}^{m_{k}}(c*\widehat{\varphi})(l)e(lx)+e^{im_{k}x}a(x)+e^{-im_{k}x}b(x)
\]
converges to zero uniformly.

Further, $a$ and $b$ have some smoothness that depends on $\varphi$
as follows:
\[
|\widehat{a}(l)|\le\sum_{|j|>|l|}|\widehat{\varphi}(j)|.
\]
and ditto for $b$.
\end{thm}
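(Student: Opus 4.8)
The plan is to follow Rajchman's classical argument for the localisation principle, but to be careful about the boundary terms that appear when the coefficients do not tend to zero. Recall the elementary identity underlying Rajchman's approach: if $T_{n}(x)=\sum_{l=-n}^{n}c_{l}e(lx)$ is the partial sum, then
\[
\varphi(x)T_{n}(x)-\sum_{l=-n}^{n}(c*\widehat{\varphi})(l)e(lx)=\sum_{\substack{|l|\le n\\ |j|>n}} \widehat{\varphi}(j)\,c_{l-j}\,e((l)x)\text{-type tails},
\]
that is, the difference of the two sides is exactly the contribution of those frequencies that are "cut" by the truncation at $\pm n$. Writing it out, the difference equals $e(nx)A_{n}(x)+e(-nx)B_{n}(x)$ where $\widehat{A_{n}}$ and $\widehat{B_{n}}$ involve only products $\widehat{\varphi}(j)c_{l}$ with $j$ large (of size $>n-|l|$, and since $|c_l|$ is bounded this is controlled by $\sum_{|j|>\text{something}}|\widehat{\varphi}(j)|$). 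The point is that when $c_{l}\to 0$ these boundary terms vanish uniformly as $n\to\infty$ and one recovers \eqref{eq:Rajchman}; when $c_{l}$ is merely bounded they need not vanish, but they live in a relatively compact family, so one can extract a convergent subsequence.

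So the key steps, in order, would be: (1) Derive the exact algebraic identity expressing $\varphi T_{n}-\sum(c*\widehat\varphi)e(l\cdot)$ as $e(nx)A_{n}(x)+e(-nx)B_{n}(x)$, and read off from it the bound $|\widehat{A_{n}}(l)|\le C\sum_{|j|>|l|}|\widehat{\varphi}(j)|$ (and similarly for $B_n$), where the constant comes from $\sup_{l}|c_{l}|$; here one uses that $\varphi$ is smooth so $\widehat\varphi$ decays fast enough that these tails are finite and the $A_n$ are genuine continuous functions with a uniform modulus of continuity. (2) Observe that the family $\{A_{n}\}_{n}$ is uniformly bounded and equicontinuous (both from the same tail bound on $\widehat{A_n}$, which is uniform in $n$), hence precompact in $C[0,1]$ by Arzelà–Ascoli; likewise $\{B_{n}\}$. (3) Pass to a subsequence $m_{k}$ of $n_k$ along which $A_{m_k}\to a$ and $B_{m_k}\to b$ uniformly. (4) Then along $m_k$,
\[
\varphi T_{m_k}-\sum_{|l|\le m_k}(c*\widehat\varphi)(l)e(lx)-e(m_k x)a(x)-e(-m_k x)b(x)
= e(m_k x)(A_{m_k}-a)+e(-m_k x)(B_{m_k}-b)\to 0
\]
uniformly, which is the assertion (after matching the sign convention for $a,b$). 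The stated smoothness bound on $\widehat a$ is inherited from the uniform bound on $\widehat{A_{m_k}}$ by passing to the limit coefficient-wise.

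The main obstacle — really the only subtlety beyond bookkeeping — is step (1): pinning down precisely which products $\widehat\varphi(j)c_{l}$ survive in the boundary term and checking that the surviving ones are exactly those indexed so that the tail $\sum_{|j|>|l|}|\widehat\varphi(j)|$ controls $|\widehat{A_n}(l)|$ uniformly in $n$. One has to split the double sum $\sum_{|p|\le n}\sum_{j}\widehat\varphi(j)c_{p-j}e((p)x)$ against $\sum_{\text{all }p}$, so that the error is $\sum_{|p|>n}$, and then re-index by the "outer" frequency $p\pm n$; the care is in not losing a factor of $n$ and in seeing that each coefficient of $A_n$ only picks up $\widehat\varphi(j)$ with $|j|$ at least the new frequency, independently of $n$. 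Once that identity is clean, Arzelà–Ascoli and the diagonal/subsequence extraction are routine, and there is nothing further to prove.
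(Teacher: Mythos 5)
Your proposal is correct and follows essentially the same route as the paper: both compute the Fourier coefficients of the error term $E_n = \varphi T_n - \sum(c*\widehat\varphi)(l)e(l\cdot)$, observe that they are concentrated near $\pm n$ with tail bounds $|\widehat{E_n}(\pm n+r)|\le C\sum_{|s|\ge |r|}|\widehat\varphi(s)|$ uniform in $n$, and then extract a convergent subsequence to produce $a$ and $b$. The only cosmetic difference is that you package the compactness step as Arzel\`a--Ascoli applied to the demodulated pieces $A_n, B_n$ (positive/negative frequency parts of $E_n$ shifted by $\mp n$), whereas the paper extracts a subsequence directly at the level of the coefficients $\widehat{E_{m_k}}(\pm m_k+r)$ and defines $a,b$ by summing the limits; both give uniform convergence from the same summable tail bound.
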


(recall that in the classic Rajchman formulation $a\equiv b\equiv0$
and $m_{k}$ can be taken to be $n_{k}$, one does not need to take
a subsequence).
\begin{proof}
Denote
\[
E_{n}(x)=\varphi(x)\sum_{l=-n}^{n}c_{l}e(lx)-\sum_{l=-n}^{n}(c*\widehat{\varphi})(l)e(lx).
\]
For $|j|>n$ only the first term appears in $\widehat{E_{n}}(j)$
and we get 
\[
\widehat{E_{n}}(j)=\sum_{l=-\infty}^{\infty}c_{j-l}\widehat{\varphi}(l)\mathbbm{1}\{|j-l|\le n\}
\]
and in particular $|\widehat{E_{n}}(n+r)|\le C\sum_{s\ge r}|\widehat{\varphi}(s)|$,
and similarly for $\widehat{E_{n}}(-n-r)$. For $|l|\le n$ the second
term also appears, but since it is simply the sum without the restriction
$|j-l|\le n$ the difference takes the following simple form:
\[
\widehat{E_{n}}(j)=-\sum_{l=-\infty}^{\infty}c_{j-l}\widehat{\varphi}(l)\mathbbm{1}\{|j-l|>n\}.
\]
Again we get $|\widehat{E_{n}}(n-r)|\le C\sum_{|s|\ge r}|\widehat{\varphi}(s)|$
and similarly for $\widehat{E_{n}}(-n+r)$. 

These uniform bounds for $|\widehat{E_{n_{k}}}(\pm n_{k}+r)|$ allow
us to use compactness to take a subsequence $m_{k}$ of $n_{k}$ such
that both $\widehat{E_{m_{k}}}(m_{k}+r)$ and $\widehat{E_{m_{k}}}(-m_{k}+r)$
converge for all $r$. Defining
\begin{align*}
a(x) & =-\sum_{r=-\infty}^{\infty}e(rx)\lim_{k\to\infty}\widehat{E_{m_{k}}}(m_{k}+r)\\
b(x) & =-\sum_{r=-\infty}^{\infty}e(rx)\lim_{k\to\infty}\widehat{E_{m_{k}}}(-m_{k}+r)
\end{align*}
the theorem is proved.
\end{proof}
Theorem \ref{thm:local} can be used to strengthen both theorems \ref{thm:doubly exponentially}
and \ref{thm:dim} to hold for bounded coefficients rather than for
coefficients tending to zero. But let us skip these applications and
show only how to use it to strengthen proposition \ref{prop:measure}.
\begin{thm}
Let $\mu$ be a measure and let $n_{k}$ be a series such that
\[
\lim_{k\to\infty}S_{n_{k}}(\mu;x)=0\qquad\forall x.
\]
Then $\mu=0$.
\end{thm}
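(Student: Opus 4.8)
The plan is to imitate the proof of Proposition~\ref{prop:measure}, replacing the classical Rajchman localisation by Theorem~\ref{thm:local}; this is legitimate because a measure has bounded Fourier coefficients, $|\widehat{\mu}(l)|\le\|\mu\|$. The price is that Theorem~\ref{thm:local} only produces a \emph{subsequence} $m_{k}$ and introduces two oscillating error terms $e(m_{k}x)a(x)$ and $e(-m_{k}x)b(x)$, which spoil the pointwise convergence to $0$ that ended the proof of Proposition~\ref{prop:measure}. Neutralising these terms is the one genuinely new point, and I expect it to be the main obstacle.

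I would first run the Baire category step verbatim. Assuming $\mu\ne0$, the function $x\mapsto\sup_{k}|S_{n_{k}}(\mu;x)|$ is finite on the compact set $\supp\mu$, so there are an open interval $I$ with $I\cap\supp\mu\ne\emptyset$ and a constant $M$ such that $|S_{n_{k}}(\mu;x)|\le M$ for every $x\in I\cap\supp\mu$ and every $k$ (the set $\bigcap_{k}\{x:|S_{n_{k}}(\mu;x)|\le M\}$ is closed, so ``dense in $I\cap\supp\mu$'' upgrades to ``contains $I\cap\supp\mu$''). Pick a smooth $\varphi\ge0$ whose support is a compact subset of $I$ and which is strictly positive on some open subinterval meeting $\supp\mu$, so that $\nu\colonqq\varphi\mu$ is a nonzero measure with $\supp\nu\subset I\cap\supp\mu$. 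Applying Theorem~\ref{thm:local} to $c_{l}=\widehat{\mu}(l)$ and this $\varphi$, and noting $(c*\widehat{\varphi})(l)=\widehat{\varphi\mu}(l)$, we obtain a subsequence $m_{k}$ of $n_{k}$, continuous functions $a,b$ (continuous because $|\widehat{a}(l)|,|\widehat{b}(l)|\le\sum_{|j|>|l|}|\widehat{\varphi}(j)|$ is summable), and functions $\eta_{k}$ with $\|\eta_{k}\|_{\infty}\to0$, such that
\[
S_{m_{k}}(\nu;x)=\varphi(x)S_{m_{k}}(\mu;x)+e(m_{k}x)a(x)+e(-m_{k}x)b(x)+\eta_{k}(x).
\]
Since $m_{k}$ is a subsequence of $n_{k}$, we have $S_{m_{k}}(\mu;x)\to0$ for every $x$, and $|S_{m_{k}}(\mu;x)|\le M$ for $x\in\supp\nu$; together with $0\le\varphi\le1$, $\|a\|_{\infty}+\|b\|_{\infty}<\infty$ and $\|\eta_{k}\|_{\infty}\to0$ this bounds $S_{m_{k}}(\nu;\cdot)$ on $\supp\nu$ by some constant $M'$ uniformly in $k$. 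Parseval then gives $\sum_{|l|\le m_{k}}|\widehat{\nu}(l)|^{2}=\int\overline{S_{m_{k}}(\nu)}\,d\nu\le M'\|\nu\|$ for all $k$, whence $\sum_{l}|\widehat{\nu}(l)|^{2}<\infty$, so $\nu$ is an $L^{2}$ function and $S_{m_{k}}(\nu)\to\nu$ in $L^{2}$.

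Finally set $g_{k}\colonqq S_{m_{k}}(\nu)-e(m_{k}x)a-e(-m_{k}x)b=\varphi S_{m_{k}}(\mu)+\eta_{k}$. Then $g_{k}\to0$ pointwise everywhere, while $\sup_{k}\|g_{k}\|_{2}<\infty$ because $\|S_{m_{k}}(\nu)\|_{2}^{2}=\sum_{|l|\le m_{k}}|\widehat{\nu}(l)|^{2}$ is bounded and $\|e(\pm m_{k}x)a\|_{2}=\|a\|_{2}$, $\|e(\pm m_{k}x)b\|_{2}=\|b\|_{2}$; on the finite measure space $[0,2\pi]$ an $L^{2}$-bounded sequence converging a.e.\ converges weakly in $L^{2}$ to its a.e.\ limit, so $g_{k}\rightharpoonup0$. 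On the other hand $S_{m_{k}}(\nu)\to\nu$ strongly in $L^{2}$, while $e(m_{k}x)a+e(-m_{k}x)b\rightharpoonup0$ weakly in $L^{2}$ by Riemann--Lebesgue (for $h\in L^{2}$, $\langle e(\pm m_{k}x)a,h\rangle=\widehat{a\overline{h}}(\mp m_{k})\to0$ since $a\overline{h}\in L^{1}$), so $g_{k}\rightharpoonup\nu$. Uniqueness of weak limits forces $\nu=0$, contradicting the fact that $\nu=\varphi\mu$ was chosen nonzero; hence $\mu=0$. As noted, the delicate point is this last one: isolating the $L^{2}$-bounded, a.e.-vanishing combination $g_{k}$ and using weak-$L^{2}$ compactness to annihilate the oscillatory terms that Theorem~\ref{thm:local} leaves behind.
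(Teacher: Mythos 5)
Your proof is correct and follows the same route as the paper: localise by Baire category, apply Theorem~\ref{thm:local} to pass to a subsequence $m_k$ with correction terms $a,b$, use boundedness of $S_{m_k}(\varphi\mu)$ on $\supp(\varphi\mu)$ together with Parseval to conclude $\varphi\mu\in L^2$, and then exploit the oscillation of the $a,b$ terms to force $\varphi\mu=0$. Your final step --- isolating $g_k$ and using weak $L^2$ compactness plus Riemann--Lebesgue --- is simply a more explicit rendering of the paper's shorter closing argument, which observes that $S_{m_k}(\varphi\mu)\to\varphi\mu$ in measure while $\varphi S_{m_k}(\mu)\to 0$ pointwise and asserts that the oscillatory remainder then forces $\varphi\mu\equiv 0$.
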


\begin{proof}
Let $K$ be the support of $\mu$ and let, as in the proof of proposition
\ref{prop:measure}, $I$ be an interval such that $S_{n_{k}}(\mu)$
is bounded on $I$ and $I\cap K\ne\emptyset$. Let $\varphi$ be a
smooth function supported on all of $I$. We use theorem \ref{thm:local}
to find a subsequence $m_{k}$ of $n_{k}$ and an $a$ and a $b$
such that
\begin{equation}
\varphi S_{m_{k}}(\mu)-S_{m_{k}}(\varphi\mu)+e^{im_{k}x}a+e^{-im_{k}x}b\to0.\label{eq:a and b}
\end{equation}
This has two applications. First we conclude that $\varphi\mu\not\in L^{2}$.
Indeed, if we had that $\varphi\mu\in L^{2}$ then we would get that
$\varphi S_{m_{k}}(\mu)\to0$ pointwise while $S_{m_{k}}(\varphi\mu)\to\varphi\mu$
in measure, which can only hold if $\varphi\mu\equiv0$ (also $a$
and $b$ need to be zero, but we do not need this fact). This contradicts
our assumption that $I\cap K\ne\emptyset$ and that $\varphi$ is
supported on all of $I$.

Our second conclusion from (\ref{eq:a and b}) is that $S_{m_{k}}(\varphi\mu)$
is bounded on $I\cap K$, which is the support of $\varphi\mu$. From
here the proof continues as in the proof of proposition \ref{prop:measure}.
\end{proof}

\subsection*{Acknowledgements}

Both authors were supported by their respective Israel Science Foundation
grants. GK was supported by the Jesselson Foundation, and by Paul
and Tina Gardner.

\end{document}